\documentclass{llncs}
\usepackage{apacite}
\bibliographystyle{apacite}
\usepackage{amssymb}
\usepackage{amsfonts}
\usepackage{amsmath}
\usepackage{comment}
\usepackage{multirow}
\usepackage{longtable}
\usepackage{chngpage}
\usepackage{color}

\def\Z{\mathbb{Z}}

\def\row#1#2{{#1}_1,\ldots ,{#1}_{#2}}

%

%

%


%

%

%


\begin{document}
\mainmatter  

\title{Hierarchical Simple Games:  Weightedness and Structural Characterization
}
\titlerunning{Hierarchical Simple Games: Representations and Weightedness}

\author{\bf Tatiana Gvozdeva, 
Ali Hameed 
and Arkadii Slinko
}

\institute{Department of Mathematics, The University of Auckland, Private Bag 92019, Auckland, New Zealand\newline
\email{\{t.gvozdeva,a.hameed,a.slinko\}@auckland.ac.nz}
}


%
%

\maketitle

\begin{abstract}
In many situations, both in human and artificial societies, cooperating agents have different  status with respect to the activity and it is not uncommon that certain actions are only allowed to coalitions that  satisfy certain criteria, e.g., to sufficiently large coalitions or coalitions which involve players of sufficient seniority. Simmons (1988)  formalized this idea in the context of secret sharing schemes by defining the concept of a (disjunctive) hierarchical access structure. Tassa (2007) introduced their conjunctive counterpart. From the game theory perspective access structures in secret sharing schemes are simple games.

 In this paper we prove the duality between disjunctive and conjunctive hierarchical games. We introduce a canonical representation theorem for both types of hierarchical games and characterize disjunctive ones as complete games with a unique shift-maximal losing coalition. We give a short combinatorial proof of the Beimel-Tassa-Weinreb (2008) characterization  of weighted disjunctive hierarchical games. By duality we get similar theorems for conjunctive hierarchical games.
\end{abstract}

\section{Introduction}\label{sec:intro}
In many situations cooperating agents  have different  status with respect to the activity.  In the theory of simple games developed by \cite{vNM:b:theoryofgames} 
seniority of players is modeled by giving them different weights. Such situation arise, for example,  in the context of corporate voting when different shareholders have different number of shares.  The access structure in secret sharing schemes \cite{Simmons:1988,Stinson:1992} can also be modeled by a simple game, but in this theory a different approach in defining seniority is often used.  To this end \cite{Simmons:1988} introduced the concept of a hierarchical access structure. Such an access structure stipulates that agents are partitioned into $m$ levels, and a sequence of thresholds $k_1<k_2<\ldots<k_m$ is set, such that a coalition is authorized if it has either $k_1$ agents of the first level or $k_2$ agents of the first two levels or $k_3$ agents of the first three levels etc.  Consider, for example, the situation of a money transfer from one bank to another. If the sum to be transferred is sufficiently large, this transaction must be authorized by three senior tellers or two vice-presidents. However, two senior tellers and a vice-president can also authorize the transaction.  These hierarchical structures are called {\em disjunctive}, since only one of the $m$ conditions must be satisfied for a coalition to be authorized. If all conditions must be satisfied, then the hierarchical access structure is called {\em conjunctive}.  A typical example of a conjunctive hierarchical game would be the United Nations Security Council where for the passage of a resolution all five permanent members must vote for it {\em and} also at least nine members in total.  

It has been shown that these two approaches are seldom  equivalent since hierarchical access structures are seldom weighted. Both  \cite{beimel:360} and  \cite{padro:2010} characterized weighted disjunctive hierarchical access structures as a part of their characterization of weighted ideal access structures. They showed that, beyond two levels, disjunctive hierarchical structures are normally non-weighted.  This is extremely interesting from the game-theoretic point of view, since we now have a natural class of non-weighted access structures and hence simple games. However, the proof of this characterization in both papers was indirect. They used the fact that hierarchical access structures are ideal \cite{Brickell:1990:ISS:111563.111607} and the well-known relation between ideal secret sharing schemes and matroids \cite{springerlink:10.1007/0-387-34805-0-25}.  Conjunctive hierarchical access structures, which were introduced in \cite{Tassa:2007}, have got much less attention. We will use the game-theory methods and terminology, and we will talk about hierarchical games, not access structures. 

Progress in studying hierarchical games was hindered by the absence of any canonical representation, which is needed since different values of parameters can give us the same game. 
In this paper we introduce a canonical representation  of hierarchical games, and  give a short combinatorial proof of the Beimel-Tassa-Weinreb characterization theorem by using the technique of trading transforms developed in \cite{tz:b:simplegames}. Our statement is slightly more general, as it allows for the existence of dummy players. We also characterize disjunctive hierarchical games as complete games with a unique shift-maximal losing coalitions.  Then we prove the duality between disjunctive and conjunctive games. This allows us to characterize weighted conjunctive hierarchical games and obtain their structural characterization as complete games with a unique shift-minimal winning coalition.  The class of complete games with a unique shift-minimal winning coalition was studied in its own right  in \cite{RePEc:eee:ejores:v:188:y:2008:i:2:p:555-568}. However, they did not notice that the games which they study are hierarchical conjunctive games.

\section{Preliminaries}

The background material on simple games can be found in \cite{tz:b:simplegames}.

 \begin{definition}
Let $P=[n]=\{1,2,\ldots, n\}$ be a  set of players and let $\emptyset\ne W \subseteq 2^P$ be a collection of subsets of $P$ that satisfies the following property:
 \begin{equation}
 \label{monot}
\text{ if $X\in W$ and $X\subseteq Y$, then $Y\in W$.}
 \end{equation}
 In such case the pair $G=(P,W)$ is called a {\em simple game} and the set $W$ is called the set of {\em winning coalitions} of $G$. Coalitions that are not in $W$ are called {\em losing}.
\end{definition} 

Due to the monotonic property (\ref{monot}) the subset $W $ is completely determined by the set $W_{\text{min}} $ of  minimal winning coalitions of $G$.  A player who does not belong to any minimal winning coalition is called a {\em dummy}. Such a player can be removed from any winning coalition without making it losing.

\begin{definition}
A simple game $G=(P,W) $ is called a {\em weighted majority game}  if there exist nonnegative weights $\row wn$ and a threshold $q$ such that 
\begin{equation}
\label{WMG}
X\in W \Longleftrightarrow \sum_{i\in X}w_i\ge q.
\end{equation}
\end{definition}
In secret sharing, weighted threshold access structures  were introduced by \cite{shamir:1979}.\par\smallskip

A distinctive feature of many games is that the set of players is partitioned into subsets, and players in each of the subsets have equal status.  We suggest  analyzing such games with the help of multisets.  Given a simple game $G$ we define a relation $\sim_{G} $ on $P$ by setting $i \sim_{G} j$ if for every set $X\subseteq P$ not containing $i$ and~$j$ 
\begin{equation}
\label{condition}
X\cup \{i\}\in W \Longleftrightarrow X\cup \{j\} \in W.
\end{equation}

\begin{lemma}
$\sim_{G} $ is an equivalence relation.
\end{lemma}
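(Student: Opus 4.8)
The plan is to verify the three defining properties of an equivalence relation: reflexivity, symmetry, and transitivity. Reflexivity is immediate, since for any $X$ not containing $i$ the biconditional $X\cup\{i\}\in W \Longleftrightarrow X\cup\{i\}\in W$ is a tautology, so $i\sim_G i$. Symmetry is equally direct: the condition (\ref{condition}) defining $i\sim_G j$ quantifies over sets $X$ avoiding both $i$ and $j$, and the class of such sets is unchanged under swapping $i$ and $j$; since the logical connective $\Longleftrightarrow$ is itself symmetric, the statement defining $i\sim_G j$ is literally the same as the one defining $j\sim_G i$.

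The real work is transitivity. Suppose $i\sim_G j$ and $j\sim_G k$, and fix an arbitrary $X\subseteq P$ containing none of $i,j,k$ first; I want $X\cup\{i\}\in W \Longleftrightarrow X\cup\{k\}\in W$. When $X$ avoids $j$ as well, I simply chain the two hypotheses: $i\sim_G j$ gives $X\cup\{i\}\in W \Longleftrightarrow X\cup\{j\}\in W$, and $j\sim_G k$ gives $X\cup\{j\}\in W \Longleftrightarrow X\cup\{k\}\in W$, so composing the two biconditionals yields the desired equivalence.

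The main obstacle is that the definition of $i\sim_G k$ requires the test coalition to range over all $X$ avoiding only $i$ and $k$, so I must also handle the case where such an $X$ contains $j$. Write $X=X'\cup\{j\}$ with $X'$ avoiding $i,j,k$. Here the trick is to insert the ``bridge'' player through auxiliary coalitions rather than directly. Applying $j\sim_G k$ to the set $X'\cup\{i\}$ (which avoids $j$ and $k$) gives $X'\cup\{i,j\}\in W \Longleftrightarrow X'\cup\{i,k\}\in W$, and applying $i\sim_G j$ to the set $X'\cup\{k\}$ (which avoids $i$ and $j$) gives $X'\cup\{i,k\}\in W \Longleftrightarrow X'\cup\{j,k\}\in W$. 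Chaining these two biconditionals produces $X'\cup\{i,j\}\in W \Longleftrightarrow X'\cup\{j,k\}\in W$, which is exactly $X\cup\{i\}\in W \Longleftrightarrow X\cup\{k\}\in W$. Since both the $j\notin X$ and $j\in X$ cases are covered, $i\sim_G k$ holds, completing transitivity and hence the proof.
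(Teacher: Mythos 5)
Your proof is correct and follows essentially the same route as the paper's: reflexivity and symmetry are dispatched as trivial, and transitivity is split into the two cases $j\notin X$ and $j\in X$, with the latter handled by exactly the same two-step chain through the auxiliary coalitions $X'\cup\{i,k\}$ (the paper writes this as $X \leftrightarrow (X\setminus\{j\})\cup\{k\} \leftrightarrow (X\setminus\{i\})\cup\{k\}$, which is the identical argument in slightly different notation). No substantive differences.
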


\begin{example}
Suppose we have $P = \{a_1,a_2,b_1,c_1\}$ as the full set of players with weights as follows: $a_1$ and $a_2$ have weights 1, $b_1$ has weight 2 and $c_1$ has weight 3. Then the following is the set of minimal winning coalitions for the game with $q = 3$. 
\[
W_{min} = \{\{a_1,b_1\},\{a_2,b_1\},\{c_1\}\}.
\]
This gives $a_1 \sim_{G} a_2$ and of course $a_2 \sim_{G} a_1$ as $\sim$ is symmetric. Since  $\sim$ is reflexive, then $a_i\sim a_i$ for $i=1,2$, and also $b_1 \sim_{G} b_1$. Similarly $c_1 \sim_{G} c_1$. It follows that our equivalence classes are $\{a_1,a_2\},\{b_1\}$ and $\{c_1\}$.   
\end{example}

We need now the notion of a {\em multiset}. 

\begin{definition}
A multiset on the set $[m]$ is a mapping $\mu\colon [m]\to \Z_+$ of $[m]$ into the set of non-negative integers. It is often written in the form
\[
\mu = \{1^{k_1},2^{k_2},\ldots, m^{k_m}\},
\]
where $k_i=\mu(i)$ is called the {\em multiplicity} of $i$ in $\mu$. 
\end{definition}

A multiset $\nu = \{1^{j_1},\ldots, m^{j_m}\}$ is a submultiset of a multiset $\mu = \{1^{k_1},\ldots, m^{k_m}\}$, iff $j_i\le k_i$ for all $i=1,2,\ldots, m$. This is denoted as $\nu \subseteq \mu$.\par\smallskip

The existence of large equivalence classes relative to $\sim_G$ allows us to compress the information about the game. This is done by the following construction.
Let now $G=(P,W)$ be a game and $\sim_{G}$ be its corresponding equivalence relation. Then $P$ can be partitioned into a finite number of equivalence classes  $P=P_1\cup P_2\cup\ldots\cup P_m$ relative to $\sim_{G}$ and suppose that $|P_i|=n_i$. Then we put in correspondence to the set of agents $P$ a multiset $\bar{P}=\{1^{n_1},2^{n_2},\ldots, m^{n_m}\}$. We take our base set $P$, identify those agents which are equivalent  and we do not distinguish between them any further. We carry over the game structure to $\bar{P}$ as well by defining the set of submultisets $\bar{W}\subseteq \bar{P}$ by assuming that a submultiset  $Q=\{1^{\ell_1},2^{\ell_2},\ldots, m^{\ell_m}\}$ is winning in $\bar{G}$ if a subset of $P$ containing  $\ell_i$ agents from $P_i$ ($i=1,2,\ldots, m$), is winning in~$G $. This definition is correct since the sets $P_i$ are defined in such a way that it does not matter which $\ell_i$ players from $P_i$ are involved. We will call $\bar{G}=(\bar{P},\bar{W})$ the {\em canonical representation} of $G$.

\begin{definition}
A pair $\bar{G}=(\bar{P},\bar{W})$ where $\bar{P} = \{1^{n_1},2^{n_2},\ldots, m^{n_m}\}$ and  $\bar{W}$ is a  system of submultisets of the multiset  $\bar{P}$ is said to be a {\em simple game} on $\bar{P}$  if $X\in \bar{W} $ and $X\subseteq Y$ implies $Y\in\bar{W}$.  
\end{definition}

So the canonical representation of a simple game on a set of players $P$ is a simple game on  the multiset $\bar{P}$. We will omit bars when this does not invite confusion.\par\smallskip

Given a game $G$ on a set of players $P$ we may also define a relation $\succeq_{G} $ on $P$ by setting $i \succeq_{G} j$ if for every set $X\subseteq U$ not containing $i$ and~$j$ 
\begin{equation}
\label{isbel}
X\cup \{j\}\in W \Longrightarrow X\cup \{i\} \in W.
\end{equation}
That is known as Isbel's desirability relation \cite{tz:b:simplegames}. The  game is called {\em complete} if $\succeq_{G}$ is a total (weak) order. We also define the relation $i \succ_{G} j$ as $i \succeq_{G} j$ but not $j \succeq_{G} i$.

\begin{definition}
We say that $\bar{G}=(\bar{P},\bar{W})$ is a weighted majority game if there exist non-negative weights $\row wm$ and $q\ge 0$ such that  $Q=\{1^{\ell_1},2^{\ell_2},\ldots, m^{\ell_m}\}$ is winning  iff ${\sum_{i=1}^m \ell_iw_i\ge q}$.
\end{definition}

If $G$ is weighted, then it is well-known (see, e.g., \cite{tz:b:simplegames}, p.91) that we can find a weighted representation, for which  equivalent players have equal weights. Hence we obtain

\begin{proposition}
A simple game  $G=(P,W)$ is a weighted majority game if and only if the corresponding simple game $\bar{G}=(\bar{P},\bar{W})$ is. 
\end{proposition}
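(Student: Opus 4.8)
The plan is to prove both directions of the equivalence, where the main content lies in showing that a weighted representation of $\bar{G}$ can be converted into a weighted representation of $G$ and vice versa.

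For the forward direction, suppose $G=(P,W)$ is a weighted majority game. First I would invoke the cited fact (\cite{tz:b:simplegames}, p.~91) that whenever a game is weighted, one may choose a weighted representation in which equivalent players receive equal weights. Concretely, if $i\sim_G j$ then I may assume $w_i=w_j$. Since the equivalence classes of $\sim_G$ are exactly the sets $P_1,\ldots,P_m$ used to build $\bar{P}=\{1^{n_1},\ldots,m^{n_m}\}$, all players in $P_i$ share a common weight, which I would call $\bar{w}_i$. I would then define weights on the multiset by $\bar{w}_i$ for level $i$ and keep the same threshold $q$. The verification is routine: a submultiset $Q=\{1^{\ell_1},\ldots,m^{\ell_m}\}$ corresponds to choosing $\ell_i$ players from each $P_i$, and by the equal-weight property the total weight of any such selection is $\sum_{i=1}^m \ell_i\bar{w}_i$, independent of which particular players are chosen. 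Hence $Q\in\bar{W}$ iff $\sum_i \ell_i\bar{w}_i\ge q$, so $\bar{G}$ is weighted.

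For the converse, suppose $\bar{G}=(\bar{P},\bar{W})$ is weighted with level weights $\bar{w}_1,\ldots,\bar{w}_m$ and threshold $q$. I would lift these to weights on $P$ by assigning to every player in $P_i$ the weight $\bar{w}_i$. Given any coalition $X\subseteq P$, let $\ell_i=|X\cap P_i|$; then $\sum_{j\in X}w_j=\sum_{i=1}^m \ell_i\bar{w}_i$, and by the correctness of the canonical representation $X\in W$ iff the associated submultiset $\{1^{\ell_1},\ldots,m^{\ell_m\}}$ lies in $\bar{W}$, which holds iff $\sum_i \ell_i\bar{w}_i\ge q$. Thus $G$ is weighted by (\ref{WMG}).

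I expect both directions to be essentially bookkeeping, so there is no serious obstacle here; the only subtle point worth stating carefully is that the passage from $P$ to $\bar{P}$ is well-defined precisely because membership in $W$ depends only on the number of players drawn from each $P_i$ and not on their identities, which is exactly the defining property of the relation $\sim_G$. The real work—the equal-weight normalization—is borrowed wholesale from \cite{tz:b:simplegames}, so the proposition is genuinely a short corollary once that normalization is in hand.
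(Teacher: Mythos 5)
Your argument is correct and follows the same route as the paper: the paper derives the proposition directly from the cited fact that a weighted game admits a representation in which equivalent players have equal weights, and the remaining bookkeeping in both directions is exactly what you spell out. No issues.
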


One of the most interesting classes of complete games is hierarchical games. They can be of two types (\cite{beimel:360}, \cite{Tassa:2007}), and they will be considered in the next section. \par\smallskip

If a game $G $ is complete, then we define {\em  shift-minimal} ($\delta$-minimal in \cite{CF:j:complete}) winning coalitions and shift-maximal losing coalitions. By a {\em shift} we mean a replacement of a player  of a coalition by a less desirable player which did not belong to it.  Formally, given a  coalition $X$, player $p\in X$ and another player $q\notin X$ such that $q\prec_{G}p$, we say that the coalition $ (X\setminus \{p\})\cup \{q\} $ is obtained from $X$ by a {\em shift}. A winning coalition $X$ is {\em shift-minimal} if  every coalition contained in it and every coalition obtained from it by a shift are losing. A losing coalition $Y$ is said to be {\em shift-maximal} if every coalition that contains it is winning and there does not exist another losing coalition from which $Y$ can be obtained by a shift.  \par\medskip


The definition of a shift  in the multiset context must be adapted as follows.

\begin{definition}
Let $G$ be a complete simple game on a multiset $P=\{1^{n_1},\ldots, m^{n_m}\}$, where 
$
1\succ_{G}2\succ_{G}\ldots \succ_{G} m.
$
Suppose a submultiset
\[
A'=\{\ldots,i^{\ell_i},\ldots, j^{\ell_j},\ldots \}
\]
has $\ell_i\ge 1$ and $\ell_j<n_j$ for some $i<j$. Then we will say that the submultiset
\[
A'=\{\ldots,i^{\ell_i-1},\ldots, j^{\ell_j+1},\ldots \}
\]
is obtained from $A$ by a shift. 
\end{definition}
Shift-minimal winning and shift-maximal losing coalitions are then defined straightforwardly.

For $X \subset P$ we will
denote its complement $P \setminus X$ by $X^c$. 

\begin{definition}
Let $G=(P,W)$ be a simple game and $A\subseteq P$. Let us define subsets 
\[
W_{\text{sg}}=\{X\subseteq  A^c\mid X\in W\}, \  
W_{\text{rg}}=\{X\subseteq A^c\mid X\cup A\in W\}.
\]
Then the game $G_A=(A^c,W_\text{sg})$ is called a {\em subgame} of $G$ and $G^A=(A^c,W_\text{rg})$ is called a {\em reduced game} of $G$. 
\end{definition}

\begin{proposition}
Every subgame and every reduced game of a weighted majority game is also a weighted majority game.
\end{proposition}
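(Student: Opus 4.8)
The plan is to produce explicit weighted representations for $G_A$ and $G^A$ directly from a weighted representation of $G$, by simply restricting the weights to the surviving players and adjusting the quota. Fix nonnegative weights $w_1,\ldots,w_n$ and a threshold $q$ with $X\in W \iff \sum_{i\in X}w_i\ge q$, and keep for each player $i\in A^c$ its original weight $w_i$. For the subgame $G_A$, a coalition $X\subseteq A^c$ lies in $W_{\text{sg}}$ precisely when $X\in W$, that is when $\sum_{i\in X}w_i\ge q$. Thus the restricted weights together with the \emph{unchanged} threshold $q$ already furnish a weighted representation of $G_A$, and there is nothing more to do in this case.

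For the reduced game $G^A$, a coalition $X\subseteq A^c$ lies in $W_{\text{rg}}$ exactly when $X\cup A\in W$. Since $X$ and $A$ are disjoint, the additivity of weights gives $\sum_{i\in X\cup A}w_i=\sum_{i\in X}w_i+s$, where $s=\sum_{i\in A}w_i$ is a constant depending only on $A$. Hence $X$ is winning in $G^A$ if and only if $\sum_{i\in X}w_i\ge q-s$, so the natural candidate is to keep the same restricted weights and use the shifted threshold $q-s$.

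The one step requiring care — and essentially the only real content of the proof — is that the definition of a weighted majority game insists the threshold be nonnegative, whereas $q-s$ can be negative; this occurs precisely when $A$ by itself already clears the quota. I would resolve this by taking $q'=\max(0,\,q-s)$. When $q-s\ge 0$ this is the honest shifted threshold just computed. When $q-s<0$ we have $s>q$, so $X\cup A$ meets the quota for \emph{every} $X\subseteq A^c$, making $G^A$ the trivial game in which all coalitions win; and $q'=0$ represents this faithfully because $\sum_{i\in X}w_i\ge 0$ always holds for nonnegative weights. In either case $q'\ge 0$, so the representation is legitimate and $G^A$ is weighted. The multiset (canonical) version is identical verbatim, or may instead be deduced from the preceding Proposition linking the weightedness of $G$ and $\bar G$.
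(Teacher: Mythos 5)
Your proof is correct, and it is the standard argument: the paper states this proposition without any proof at all, treating it as routine, and your restriction-of-weights construction (same quota for the subgame, quota shifted by $s=\sum_{i\in A}w_i$ for the reduced game) is exactly what the authors implicitly rely on. Your extra care in clamping the reduced quota at $\max(0,\,q-s)$ to respect nonnegativity is a legitimate refinement that the paper does not even mention.
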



Let us discuss briefly duality in games. The dual game of a
game $G=(P,W)$ is defined as $G^{*} = (P,L^{c})$. Equivalently,
the winning coalitions of the game $G^{*}$ dual to $G$  are
exactly the complements of losing coalitions of $G$. We have $G=G^{**}$.
%
We note also that, If $A \subseteq P$, then:
$
(G_A)^{*} = (G^{*})^A$ and  $(G^A)^{*} = (G^{*})_A.
$
 Moreover, the operation of taking the dual is known to  preserve weightedness. We will also use the fact that Isbel's desirability relation is self-dual, that is $x\succeq_G y$ if and only if 
$x\succeq_{G^{*}} y$.
All these concepts can be immediately reformulated for the games on multisets.\par\medskip


Let us remind the reader of some more facts from the theory of simple games. The sequence of coalitions
\begin{equation}
\label{Tt}
{\cal T}=(\row Xj;\row Yj)
\end{equation}
is called a {\em trading transform} if the coalitions $\row Xj$ can be converted into the coalitions $\row Yj$ by rearranging players. In other words, for any player $p$ the cardinality of the set $\{i\mid p\in X_i\}$ is the same as the cardinality of the set $\{i\mid p\in Y_i\}$. We say that the trading transform $\cal T$ has length $j$. 

\begin{theorem}[\cite{tz:b:simplegames}]
A game $G=(P,W)$ is a weighted majority game if for no $j$ does there exist a trading transform (\ref{Tt}) such that $\row Xj$ are winning and $\row Yj$ are losing.
\end{theorem}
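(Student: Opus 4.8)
The plan is to prove this by a separation argument: reduce the existence of weights to a linear feasibility problem and then read a trading transform off the dual certificate of infeasibility. First I would record the easy converse, which motivates the condition. Suppose $G$ has weights $\row wn$ and threshold $q$, and write $w(X)=\sum_{p\in X}w_p$ for the total weight of a coalition. If $(\row Xj;\row Yj)$ is any trading transform, then $\sum_{t=1}^j w(X_t)=\sum_{t=1}^j w(Y_t)$, since each player contributes its weight to both totals equally often. But if every $X_t$ is winning and every $Y_t$ is losing we would have $\sum_t w(X_t)\ge jq>\sum_t w(Y_t)$, a contradiction. Hence a weighted game admits no such transform, and it remains to prove the substantive direction.

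For that direction I would argue by contraposition, assuming $G$ is \emph{not} weighted and constructing a trading transform of winning into losing coalitions. Consider the linear system in the unknowns $\row wn$ and $q$ requiring $w(X)\ge q$ for every winning $X$, $w(Y)\le q-1$ for every losing $Y$, and $w_i\ge 0$ for every $i$; here the normalization $q-1$ turns the strict losing inequality into a usable one, which is legitimate after scaling since there are only finitely many coalitions. The game is weighted precisely when this system is feasible, so under our assumption it is infeasible. All the data are integral, the coalition indicator vectors lying in $\{0,1\}^n$, so Farkas' lemma supplies nonnegative rational multipliers $\lambda_X$ for the winning constraints, $\mu_Y$ for the losing constraints, and $\nu_i$ for the sign constraints that witness the infeasibility. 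Matching coefficients, the $q$-coefficients force $\sum_X\lambda_X=\sum_Y\mu_Y=:j$, the constant term forces $j>0$, and the $w_i$-coefficients force $\sum_{X\ni i}\lambda_X+\nu_i=\sum_{Y\ni i}\mu_Y$ for every player $i$.

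Clearing denominators I would take all $\lambda_X,\mu_Y$ to be nonnegative integers, and list each winning coalition with multiplicity $\lambda_X$ and each losing coalition with multiplicity $\mu_Y$, obtaining two sequences of the common length $j$. Since $\nu_i\ge 0$, the coefficient relation gives $\sum_{X\ni i}\lambda_X\le\sum_{Y\ni i}\mu_Y$: in these sequences every player $i$ occurs at least as often among the losing coalitions as among the winning ones, with a deficit of exactly $\nu_i$ occurrences on the winning side. The main obstacle is that a trading transform demands \emph{exact} equality of occurrences, not this inequality, and this is where I would invoke monotonicity. For each player $i$ I would insert $i$ into enough of the repeated winning coalitions to absorb its deficit $\nu_i$; by property~(\ref{monot}) adding a player to a winning coalition leaves it winning, so the augmented sequence still consists solely of winning coalitions, and now every player occurs equally often on both sides. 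This yields a genuine trading transform $(\row Xj;\row Yj)$ with all $X_t$ winning and all $Y_t$ losing, contradicting the hypothesis. Therefore $G$ must be a weighted majority game.
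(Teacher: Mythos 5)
The paper does not prove this statement at all: it is imported verbatim from Taylor and Zwicker's book \cite{tz:b:simplegames} as a known tool, so there is no in-paper argument to compare against. Judged on its own, your proof is correct and is essentially the classical route to this result (the ``theorem of the alternative'' / Farkas argument that Taylor and Zwicker themselves use for trade robustness): encode weightedness as feasibility of the linear system $w(X)\ge q$, $w(Y)\le q-1$, $w_i\ge 0$, extract integral Farkas multipliers $\lambda_X,\mu_Y,\nu_i$ from infeasibility, and read off a trading transform. The coefficient bookkeeping is right: the $q$-column gives $\sum_X\lambda_X=\sum_Y\mu_Y=j>0$ and the $w_i$-columns give $\sum_{X\ni i}\lambda_X+\nu_i=\sum_{Y\ni i}\mu_Y$, and your use of monotonicity~(\ref{monot}) to pad the winning side is exactly the right way to convert the resulting inequality of occurrence counts into the exact equality a trading transform requires. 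The one step you should make explicit is that the padding is actually possible: to absorb the deficit $\nu_i$ you must find $\nu_i$ entries of the winning list that do not already contain $i$, and this holds because the number of such entries is $j-\sum_{X\ni i}\lambda_X$ while $\nu_i=\sum_{Y\ni i}\mu_Y-\sum_{X\ni i}\lambda_X\le j-\sum_{X\ni i}\lambda_X$. With that sentence added, the argument is complete; note also that your easy converse direction is not part of the quoted statement (which is only the sufficiency claim), though it does no harm to include it.
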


This theorem gives a combinatorial way to prove the existence of weights for a given game.

\begin{definition}
Let $G=(P,W)$ be a simple game. A trading transform (\ref{Tt}) where all $\row Xj$ are winning in $G$ and all $\row Yj$ are losing in $G$ is called {\em certificate of non-weightedness} for~$G$. 
\end{definition}

For complete games the criterion can be made easier to check, by the following result. 

\begin{theorem}[\cite{FMDAM}]
\label{FM}
A complete game is a weighted majority game if and only if it does not have certificates of non-weightedness (\ref{Tt}) such that $\row Xj$ are shift-minimal winning coalitions and $\row Yj$ are losing coalitions.
\end{theorem}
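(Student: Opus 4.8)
The plan is to prove the two implications separately, the forward one being immediate and the reverse one requiring a reduction procedure applied to a certificate. If $G$ is weighted, then by the general trading-transform criterion stated above it has \emph{no} certificate of non-weightedness at all, and in particular none whose winning coalitions happen to be shift-minimal; this settles one direction for free. For the converse I would argue the contrapositive: assuming $G$ is not weighted, the same criterion hands me some certificate $\mathcal{T}=(X_1,\ldots,X_j;Y_1,\ldots,Y_j)$ with all $X_i$ winning and all $Y_i$ losing, and the task becomes to massage $\mathcal{T}$ into one in which every $X_i$ is additionally shift-minimal. Since $G$ is complete I would first pass to the canonical multiset model, replacing each coalition by its class-vector $\mathbf{x}=(\ell_1,\ldots,\ell_m)$ relative to $1\succ_{G}\cdots\succ_{G}m$. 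Because equivalent players are interchangeable, the player-level balance condition of a trading transform becomes the componentwise identity $\sum_i\mathbf{x}_i=\sum_i\mathbf{y}_i$, and conversely any such ``vector trade'' can be realized back at the level of individual players.

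Among all certificates I would then select one that is extremal in two stages: first minimize the total cardinality $\sum_i|X_i|$, and then, among those, minimize $\Phi=\sum_i\langle\mathbf{w},\mathbf{x}_i\rangle$ for a fixed strictly decreasing positive weight vector $\mathbf{w}$, so that $w_1>\cdots>w_m>0$. Note that $\Phi$ is well defined on certificates precisely because $\sum_i\mathbf{x}_i=\sum_i\mathbf{y}_i$. The purpose of the weights is that both \emph{removing} a player and performing a \emph{down-shift} $a\to b$ with $b>a$ strictly decrease $\Phi$, by $w_\ell$ and by $w_a-w_b$ respectively (here $e_a$ denotes the unit multiset at class $a$). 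I would then claim that at such an extremal certificate every $X_i$ is shift-minimal winning, and establish this by contradiction, showing that any violation produces a strictly cheaper certificate.

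For the two reductions: if some $X_i$ fails to be minimal winning, i.e. $X_i-e_\ell$ is still winning, then by balance a copy of class $\ell$ also occurs in some $Y_k$; deleting it keeps $Y_k$ losing, and removing $\ell$ from both sides preserves the trade while lowering the cardinality, contradicting the first stage. Hence every $X_i$ is minimal winning. If a minimal-winning $X_i$ is not shift-minimal, there is a down-shift $a\to b$ with $X_i'=X_i-e_a+e_b$ still winning. To repair the balance I need the losing side to lose one class-$a$ occurrence and gain one class-$b$ occurrence while remaining losing. Deleting a class-$a$ occurrence from some $Y_k$ (which exists by balance) is always safe; and since a down-shift of a \emph{losing} coalition stays losing, the compensation is automatic whenever some single $Y_k$ carries both a class-$a$ element and a free class-$b$ slot (meaning $(\mathbf{y}_k)_b<n_b$). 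In that case $\Phi$ strictly drops, the desired contradiction.

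The crux, and where I expect the real work, is the remaining configuration in which \emph{no} single losing coalition simultaneously owns a class-$a$ element and a free class-$b$ slot, so that the required class-$b$ player can only be inserted into some $Y_{k'}$ for which $Y_{k'}+e_b$ turns out to be winning; there the naive compensation fails. I would resolve it by combining a counting argument with the extremality: a free class-$b$ slot must exist somewhere, since otherwise every $Y_k$, and hence by balance every $X_i$, would be saturated in class $b$, contradicting $(\mathbf{x}_i)_b<n_b$ as forced by the down-shift. One then uses the minimality of $\Phi$ together with the total desirability order to relocate occurrences among the losing coalitions and manufacture either a genuinely cheaper certificate or a shorter one. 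Handling this ``near-winning losing coalition'' case cleanly, as opposed to the routine bookkeeping of the other two, is the step that genuinely exploits completeness, and it is exactly what upgrades the general criterion to the shift-minimal form.
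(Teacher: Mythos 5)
The paper does not actually prove this statement --- it is imported verbatim from the cited reference --- so there is no internal proof to compare you against; I can only judge your argument on its own terms. The easy direction is fine, and so are your first two reductions: minimizing total cardinality does force every $X_i$ to be minimal winning (delete the superfluous player from $X_i$ and a matching occurrence from some $Y_k$, which stays losing by monotonicity), and a down-shift $a\to b$ of some $X_i$ can be compensated whenever a \emph{single} $Y_k$ contains a class-$a$ player and has a free class-$b$ slot, because a down-shift of a losing coalition in a complete game is again losing.

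The difficulty is that the entire content of the theorem beyond the unrestricted Taylor--Zwicker criterion is concentrated in the case you defer, and you do not resolve it: when every $Y_k$ containing a class-$a$ player is saturated at class $b$, and every $Y_{k'}$ with a free class-$b$ slot becomes winning once a $b$ is inserted, there is no local move left, and ``relocate occurrences among the losing coalitions and manufacture either a genuinely cheaper certificate or a shorter one'' restates the goal rather than achieving it. Your potential $\Phi$ is computed on the winning side only, so it gives no control over how the losing side is packed; and the obvious repairs (replicating the certificate, exchanging players between $Y_k$ and $Y_{k'}$, or routing the inserted $b$ through a chain of intermediate down-shifts) all hit the same obstruction that some intermediate coalition may become winning --- indeed the shift-maximal losing coalitions, which are exactly the ones likely to appear in an extremal certificate, are precisely those that turn winning under any insertion or up-shift. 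So the proof is incomplete at exactly the step you yourself flag as the crux. One standard way to close it is to change strategy: build completeness into the linear system from the outset (impose $w_1\ge w_2\ge\cdots\ge w_m\ge 0$ and keep only the constraints $\langle w,z\rangle\ge q$ for shift-minimal winning $z$), verify that feasibility of this reduced system is equivalent to weightedness, and extract the certificate from the theorem of the alternative, whose multipliers on the ordering constraints produce a discrepancy vector with nonnegative partial sums that must then be absorbed into the losing side; that absorption lemma is the genuine combinatorial work, and some version of it is unavoidable in your approach as well.
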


\section{Canonical Representations and Duality of Hierarchical Games}

\begin{definition}[Disjunctive Hierarchical Game] 
\label{HG}
Suppose that the set of players $P$ is partitioned into $m$ disjoint subsets $P=\cup_{i=1}^m P_i$ and let $k_1<k_2<\ldots<k_m$ be a sequence of positive integers. Then we define the game $H=H_{\exists}(P,W)$ by setting
\[
W = \{ X\in 2^P\mid \exists i \left(\left|X\cap \left(\cup_{j=1}^i P_i\right)\right|\ge k_i\right) \}.
\]
\end{definition} 

From the definition it follows that any disjunctive hierarchical game $H$ is complete, moreover for any $i\in [m]$ and $u,v\in P_i$ we have $u\sim_{H}v$. However, for arbitrary values of parameters we cannot guarantee that the canonical representation $\bar{H}$ of $H$ will be defined on the multiset $\bar{P}=\{1^{n_1},2^{n_2},\ldots, m^{n_m}\}$, since it is possible to have less than $m$ equivalence classes.   The next theorem shows when this does not happen.

\begin{theorem}
\label{Thm_1}
Let $H$ be a disjunctive hierarchical game defined on the set of players $P$ partitioned into $m$ disjoint subsets $P=\cup_{i=1}^m P_i$, where $n_i=|P_i|$, by a sequence of positive thresholds $k_1<k_2<\ldots<k_m$.  Then the canonical representation $\bar{H}$ of $H $ has $m$ equivalence classes, and hence it is defined on $\bar{P}= \{1^{n_1},2^{n_2},\ldots, m^{n_m}\}$ if and only if
\begin{enumerate}
\item[(a)] $k_1\le n_1$, and
\item[(b)] $k_i <  k_{i-1}+n_i$  for every $1<i< m$.
\end{enumerate}
When (a) and (b) hold the sequence  $(\row k{m-1})$ is determined uniquely. Moreover, $H$ does not have dummies if and only if $k_m <  k_{m-1}+n_m$; in this case  $k_m$ is determined uniquely as well. If $k_m\ge k_{m-1}+n_m$ the last $m$th level consists entirely of dummies.
\end{theorem}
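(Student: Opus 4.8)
The plan is to pass to the canonical multiset picture, where a submultiset $Q=\{1^{\ell_1},\ldots,m^{\ell_m}\}$ is winning iff $\sigma_t(Q):=\ell_1+\cdots+\ell_t\ge k_t$ for some $t$. Since $H$ is complete with $1\succeq_H\cdots\succeq_H m$, the canonical representation has $m$ classes exactly when $i\succ_H i{+}1$ for every $i<m$, and $i\succ_H i{+}1$ holds iff some winning $Q$ (with $\ell_i\ge1$, $\ell_{i+1}<n_{i+1}$) becomes losing after shifting one type-$i$ element to type $i{+}1$. Such a shift changes only $\sigma_i$, lowering it by one; so a winning $Q$ whose shift is losing must satisfy $\sigma_i(Q)=k_i$ and $\sigma_t(Q)<k_t$ for all $t\ne i$, and conversely any such $Q$ is a witness. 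I will call such a $Q$ a \emph{witness at level $i$}.

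The constraints for $t>i$ are free—set $\ell_t=0$, so $\sigma_t=k_i<k_t$—hence a witness at level $i$ exists iff one can reach $\sigma_i=k_i$ while keeping $\sigma_t\le k_t-1$ for $t<i$ and $\ell_t\le n_t$ throughout. I would encode this by the recursion $M_0=0$, $M_t=\min(M_{t-1}+n_t,\,k_t-1)$, the largest value of $\sigma_t$ attainable by a coalition still losing on the first $t$ levels; a witness at level $i$ exists iff $k_i\le M_{i-1}+n_i$. A direct induction shows (a) and (b) hold iff $M_t=k_t-1$ for all $t\le m-1$: (a) gives $M_1=k_1-1$, and (b) at $t$ upgrades $M_{t-1}=k_{t-1}-1$ to $M_t=k_t-1$; conversely, failing (a) forces $\sigma_1\le n_1<k_1$ and failing (b) at $i$ forces every candidate to satisfy $\sigma_i\le (k_{i-1}-1)+n_i<k_i$, destroying the witness and merging classes $i$ and $i{+}1$. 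This yields $m$ classes $\iff$ (a) and (b). Uniqueness of $k_1,\ldots,k_{m-1}$ then drops out: under (a) and (b) the largest losing coalition inside the top $i$ classes has size exactly $M_i=k_i-1$, so $k_i$ is recovered from the game as one more than this number.

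For the dummy statements I would use the same bookkeeping. A type-$i$ element is non-dummy iff some losing coalition has $\sigma_i=k_i-1$ and $\ell_i<n_i$ (adding one more type-$i$ element then wins via level $i$). For $i\le m-1$ the coalition attaining $M_i=k_i-1$ uses $\ell_i<n_i$ players at level $i$—this is $k_i-k_{i-1}<n_i$ by (b) for $i\ge2$, and $k_1-1<n_1$ by (a)—so levels $1,\ldots,m-1$ contain no dummies. For level $m$ the requirement is a losing coalition with $\sigma_m=k_m-1$ and $\ell_m\le n_m-1$; since $\sigma_{m-1}\le k_{m-1}-1$ this needs $k_{m-1}-1\ge k_m-n_m$, i.e. $k_m<k_{m-1}+n_m$. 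If instead $k_m\ge k_{m-1}+n_m$, any coalition with $\sigma_m=k_m-1$ and $\ell_m\le n_m-1$ would have $\sigma_{m-1}\ge k_m-n_m\ge k_{m-1}$ and hence already win, so no type-$m$ element is pivotal and the whole level is dummies. Thus $H$ has no dummies iff $k_m<k_{m-1}+n_m$; in that case $M_m=k_m-1$ too, so $k_m$ is recovered as one more than the largest losing coalition in $P$, whereas if level $m$ is all dummies the level-$m$ threshold is redundant (winning via level $m$ already forces $\sigma_{m-1}\ge k_{m-1}$) and $k_m$ is indeed not determined.

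I expect the main obstacle to be the heart of the second step: verifying that the recursion $M_t=\min(M_{t-1}+n_t,\,k_t-1)$ really computes the largest losing partial sum and that (a) and (b) are exactly the conditions forcing $M_t=k_t-1$. Once this is in place, the class count, the uniqueness of all thresholds, and the dummy dichotomy all follow by reading off maximal losing coalitions.
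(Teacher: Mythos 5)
Your proof is correct and, at its core, runs along the same lines as the paper's: both arguments work level by level and turn on whether a coalition exists that separates class $i$ from class $i{+}1$. The paper phrases this via minimal winning coalitions --- it inductively builds a minimal winning coalition of size $k_i$ inside the first $i$ levels that meets $P_i$ when $k_i<k_{i-1}+n_i$, and shows the two levels merge otherwise --- and reads off uniqueness from the cardinalities of minimal winning coalitions. You phrase the same induction dually, through the maximal losing partial sums $M_t$ and ``witnesses'' (winning coalitions that a single downward shift makes losing), and recover $k_i$ from maximal losing coalitions; this buys you a more systematic bookkeeping that, unlike the paper's proof, actually carries through to the dummy claims, which the paper only asserts. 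One caveat: your asserted equivalence ``(a) and (b) hold iff $M_t=k_t-1$ for all $t\le m-1$'' fails in the backward direction (take $k_1=n_1+1$: then $M_1=\min(n_1,k_1-1)=n_1=k_1-1$, yet (a) fails). This does not damage the argument, because you only use the forward direction to evaluate the witness test $k_i\le M_{i-1}+n_i$, and you establish the converse of the main equivalence by directly destroying the witness when (a) or (b) fails; but that sentence should be restated as the one-directional implication you actually need.
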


\begin{proof}
As we know, players within each $P_i$ are equivalent. We note that if $k_1>n_1$, then $P_1\sim_{H}P_2$. On the other hand, if $k_1\le n_1$, then any $k_1$ players from $P_1$ form a winning coalition $M_1$ which ceases to be winning if we replace one of them with a player of $P_2$ yielding $P_1\not\sim_H P_2$. Suppose that we know already that $P_{i-1}\not\sim P_{i}$ for some $i<m$, and that there is a minimal winning coalition $M_{i-1}$ contained in $\cup_{j=1}^{i-1} P_{j}$ which intersects $P_{i-1}$ nontrivially and consists of $k_{i-1}$ players. If  $k_i \ge  k_{i-1}+n_i$, and a coalition $Q\subseteq \cup_{j=1}^i P_j$ is winning and has a nonzero intersection with $P_i$, then we also have $|Q\cap \left(\cup_{j=1}^{i-1} P_{j}\right)|\ge k_{i-1}$ and hence $Q\cap \left(\cup_{j=1}^{i-1} P_{j}\right)$ is also winning. Then  any player of $P_i$ in $Q$ can be replaced by any player of $P_{i+1}$ without $Q$ becoming losing, i.e., $P_i\preceq_H P_{i+1}$. From the definition of hierarchical game we have $P_i\succeq_H P_{i+1}$,  this implies $P_{i}\sim_{H}P_{i+1}$. On the other hand, if $k_i <  k_{i-1}+n_i$, we see that a minimal winning coalition in $\cup_{j=1}^i P_i$ exists which intersects with $P_i$ nontrivially and consists of $k_i$ players. For constructing it we have to take $k_i$ players of the $i$th level (if they are available) and, if their number is less than $k_i$ add $k_i-n_i$  players from $M_{i-1}$. We note that the number of players needed to be added is less than $k_{i-1}$, which makes $M_i$ minimal. As above, the existence of such coalition this implies $P_{i}\not\sim_{H}P_{i+1}$. 

The uniqueness of $(\row k{m-1})$ (and also $k_m$ in case $k_m <  k_{m-1}+n_m$) follows from the fact that these numbers are exactly the cardinalities of minimal winning coalitions in $\bar{H}$.  
\end{proof}

By $H_{\exists}({\bf n},{\bf k})$ we will denote the $m$-level disjunctive hierarchical game canonically represented by  ${\bf n}=(\row nm)$ and ${\bf k}=(\row km)$ with $k_m=k_{m-1}+n_m$ in the case where the last level consists of dummies. Every new level, except maybe the last one, adds a new class of minimal winning coalitions.

\begin{corollary}
\label{n_2>1}
Let $G=H_\exists({\bf n},{\bf k})$ be an $m$-level disjunctive hierarchical game. Then  we have $n_i>1$  for every ${1<i< m}$.
\end{corollary}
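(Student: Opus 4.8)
The plan is to extract the result directly from the constraints that define a valid canonical representation, as established in Theorem~\ref{Thm_1}. Since $G=H_\exists({\bf n},{\bf k})$ is, by the notation introduced just above the corollary, canonically represented on the multiset $\bar{P}=\{1^{n_1},\ldots,m^{n_m}\}$ with exactly $m$ equivalence classes, conditions (a) and (b) of Theorem~\ref{Thm_1} must hold. The only ingredient I need is condition~(b): $k_i < k_{i-1}+n_i$ for every $1<i<m$.

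First I would recall that the thresholds form a strictly increasing sequence $k_1<k_2<\ldots<k_m$, so in particular $k_{i-1}<k_i$. Fix any index $i$ with $1<i<m$. Placing the strict inequality $k_{i-1}<k_i$ next to condition~(b) gives the chain
\[
k_{i-1} < k_i < k_{i-1}+n_i.
\]
Since all quantities are integers, the first inequality yields $k_i\ge k_{i-1}+1$ and the second yields $k_i\le k_{i-1}+n_i-1$. Chaining these, $k_{i-1}+1\le k_{i-1}+n_i-1$, whence $n_i\ge 2$, that is $n_i>1$, as required.

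The argument is essentially immediate once the two defining constraints are written side by side, so there is no genuine obstacle; the only point that requires care is remembering that $k_i<k_{i-1}+n_i$ over the integers means $k_i\le k_{i-1}+n_i-1$, which is precisely what forces the gap between consecutive thresholds to leave room for at least two players on an interior level. Intuitively, each interior level must be large enough that its own threshold $k_i$ can sit strictly between $k_{i-1}$ and $k_{i-1}+n_i$; a level of size one would collapse this gap and, by the merging mechanism analysed in the proof of Theorem~\ref{Thm_1}, identify that level with a neighbour, contradicting the assumption that $\bar H$ genuinely possesses $m$ distinct equivalence classes.
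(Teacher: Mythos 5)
Your argument is correct and is exactly the paper's proof: the paper's one-line justification ("If $n_i=1$ for some $1<i<m$, then (b) cannot hold") is precisely your combination of the strict monotonicity $k_{i-1}<k_i$ with condition (b) of Theorem~\ref{Thm_1}, which you have simply written out in full. No difference in approach, and no gaps.
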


\begin{proof}
If $n_i=1$ for some $1<i< m$, then (b) cannot hold.
\end{proof}

We note that the first and the last $m$th level are special. If $k_1=1$, then every user of the first level is self-sufficient (passer) and its presence makes any coalition winning and if $k_m \ge  k_{m-1} +n_m$, then the $m$th level consists entirely of dummies. 

\begin{definition}[Conjunctive Hierarchical  Game] 
\label{CG}
Suppose that the set of agents $P$ is partitioned into $m$ disjoint subsets $P=\cup_{i=1}^m P_i$, and let $k_1< \ldots < k_{m-1}\le k_m$ be a sequence of positive integers. Then we define the game $H_{\forall}(P,W)$ by setting
\[
W = \{ X\in 2^P\mid \forall i \left(\left|X\cap \left(\cup_{j=1}^i P_i\right)\right|\ge k_i\right) \}.
\]
\end{definition} 

\begin{theorem}\label{all-exists-dual}
Let ${\bf n}=(\row nm)$ and ${\bf k}=(\row km)$. Then for an $m$-level hierarchical games $H_\exists({\bf n},{\bf k})^{*}=H_\forall({\bf n}, {\bf k}^{*})$ and $H_\forall({\bf n},{\bf k})^{*}=H_\exists({\bf n}, {\bf k}^{*})$,
where
\[
{\bf k}^{*}=(n_1-k_1+1, n_1+n_2 - k_2+1, \ldots, \sum_{i \in [m]} n_i -k_m +1).
\]
\end{theorem}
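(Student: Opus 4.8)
The plan is to reduce the entire statement to a bookkeeping of partial sums under complementation. For a coalition $X\subseteq P$ write $x_i=|X\cap P_i|$ and $S_i(X)=|X\cap(\cup_{j=1}^i P_j)|=\sum_{j=1}^i x_j$, and set $N_i=\sum_{j=1}^i n_j=|\cup_{j=1}^i P_j|$. The single identity on which everything rests is $S_i(X^c)=N_i-S_i(X)$, which holds because $X$ and $X^c$ partition each level, hence each level initial segment. With this in hand, every one of the $m$ threshold conditions transforms cleanly under taking complements, and the two halves of the theorem become a short computation plus an involution argument.

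First I would prove $H_\exists({\bf n},{\bf k})^{*}=H_\forall({\bf n},{\bf k}^{*})$ straight from the definition of the dual. By definition $X$ is winning in $H_\exists({\bf n},{\bf k})^{*}$ if and only if $X^c$ is losing in $H_\exists({\bf n},{\bf k})$, that is, if and only if $S_i(X^c)<k_i$ for every $i\in[m]$. Substituting $S_i(X^c)=N_i-S_i(X)$ and rewriting the strict inequality over the integers gives $S_i(X)\ge N_i-k_i+1=k_i^{*}$ for every $i$, which is exactly the condition for $X$ to be winning in $H_\forall({\bf n},{\bf k}^{*})$. So the two games share the same winning coalitions, and the equality of representations follows once we know that ${\bf k}^{*}$ is a legitimate conjunctive threshold vector.

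The second step is precisely that verification: ${\bf k}^{*}$ must satisfy $k_1^{*}<\cdots<k_{m-1}^{*}\le k_m^{*}$ with all entries positive, and this is where conditions (a) and (b) of Theorem~\ref{Thm_1} enter. A direct subtraction gives $k_{i+1}^{*}-k_i^{*}=n_{i+1}-(k_{i+1}-k_i)$, so $k_{i+1}^{*}>k_i^{*}$ is equivalent to $k_{i+1}<k_i+n_{i+1}$; for $1\le i<m-1$ this is exactly condition (b), giving the strict chain up to $k_{m-1}^{*}$. For $i=m-1$ the canonical form of $H_\exists({\bf n},{\bf k})$ guarantees $k_m\le k_{m-1}+n_m$ (with equality precisely when the top level consists of dummies), which yields the weak inequality $k_{m-1}^{*}\le k_m^{*}$. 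Finally $k_1^{*}=n_1-k_1+1\ge1$ by condition (a), and monotonicity of the chain then forces every $k_i^{*}\ge1$.

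For the dual identity $H_\forall({\bf n},{\bf k})^{*}=H_\exists({\bf n},{\bf k}^{*})$ I would avoid repeating the computation and instead exploit the involution $G=G^{**}$. The map ${\bf k}\mapsto{\bf k}^{*}$ is itself an involution, since $(k_i^{*})^{*}=N_i-(N_i-k_i+1)+1=k_i$. Applying the first identity to the vector ${\bf k}^{*}$ gives $H_\exists({\bf n},{\bf k}^{*})^{*}=H_\forall({\bf n},({\bf k}^{*})^{*})=H_\forall({\bf n},{\bf k})$, and dualizing both sides delivers the claim. The only genuinely delicate point throughout is the off-by-one that converts the disjunctive strict inequalities into the conjunctive weak ones (and the boundary case $k_{m-1}^{*}\le k_m^{*}$ under the dummy convention); everything else is the partition identity together with the already-established canonical-form constraints, so I expect no real obstacle beyond keeping the integer rounding consistent.
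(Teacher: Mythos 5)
Your proof is correct and follows essentially the same route as the paper: the same complementation identity on level-wise partial sums, the same off-by-one conversion of strict disjunctive inequalities into weak conjunctive ones yielding $k_i^{*}=\sum_{j\le i}n_j-k_i+1$, the same verification that ${\bf k}^{*}$ is a legitimate threshold vector via conditions (a) and (b), and the same derivation of the second identity from $G=G^{**}$ and ${\bf k}^{**}={\bf k}$. The only cosmetic difference is that you organize the main computation as a single chain of equivalences where the paper argues the two inclusions separately.
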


\begin{proof}
We will prove only the first equality. 
As Isbel's desirability relation is self-dual, the canonical representation of $H_\exists({\bf n},{\bf k})^{*}$ will involve the same equivalence classes and hence it will be defined on the same multiset.
Let ${\bf k}^{*}=(k^{*}_1, k^{*}_2,\ldots, k^{*}_m)$. It is easy to see that $k^{*}_i<k^{*}_{i+1}$ is equivalent to $k_{i+1}<k_i+n_{i+1}$ so we have  $k^{*}_1<\ldots< k^{*}_{m-1}\le  k^{*}_m$ and $k^{*}_{m-1}=  k^{*}_m$
if and only if $k_m=k_{m-1}+n_m$. So ${\bf k}^{*}$ is well-defined.
Consider a losing in coalition $X =
\{1^{\ell_1}, 2^{\ell_2}, \ldots, m^{\ell_m}\}$ in $H_\exists({\bf n},{\bf k})$. It satisfies
$\sum_{j \in [i]}\ell_j< k_i$ for all $i \in [m]$. Then 
\[
\sum_{j \in [i]}(n_j -\ell_j) > \sum_{j \in [i]}n_j- k_i,
\] 
for all $i \in [m]$, and the coalition $X^{c} =
\{1^{n_1-\ell_1}, 2^{n_2-\ell_2}, \ldots, m^{n_m-\ell_m}\}$
satisfies the  condition $\sum_{j \in [i]}(n_j -\ell_j) \geq
\sum_{j \in [i]}n_j - k_i+1=k^{*}_i,$ for all $i \in [m]$. Therefore, $X^c
$ is winning in $H_\forall ({\bf n}, {\bf k}^{*})$.

We need also to show that the complement of every winning in
$H_\exists({\bf n},{\bf k})$ coalition is losing in
$H_\forall({\bf n},{\bf k}^{*})$. Consider a coalition $X =
\{1^{\ell_1}, 2^{\ell_2}, \ldots, m^{\ell_m}\}$ which is winning
in $H_\exists({\bf n},{\bf k})$. It means that there is an $i \in [m]$ such that $\sum_{j
\in [i]} \ell_j \geq k_i$. But then the condition
\[
\sum_{j \in [i]}(n_j -\ell_j) \leq \sum_{j \in [i]}n_j - k_i < \sum_{j \in [i]}n_j - k_i +1 = k^{*}_i
\] 
holds. Thus, the complement $X^{c}
= \{1^{n_1-\ell_1}, 2^{n_2-\ell_2}, \ldots, m^{n_m-\ell_m}\}$ is
losing in $H_{\forall}({\bf n},{\bf k}^{*})$.
\end{proof}

We note a certain duality for the second parameter as ${\bf k}^{{*}{*}}={\bf k}$.

\begin{theorem}
\label{Thm_2}
Let $H$ be a conjunctive hierarchical game defined on the set of agents $P$ partitioned into $m$ disjoint subsets $P=\cup_{i=1}^m P_i$, where $n_i=|P_i|$, by a sequence of positive thresholds $k_1<\ldots < k_{m-1} \le k_m$.  Then the canonical representation $\bar{H}$ of $H $ has $m$ equivalence classes and, hence, it is defined on $\bar{P}= \{1^{n_1},2^{n_2},\ldots, m^{n_m}\}$ if and only if
\begin{enumerate}
\item[(a)] $k_1\le n_1$, and
\item[(b)] $k_{i} < k_{i-1}+n_i $  for every $1< i\le m$.
\end{enumerate}
When (a) and (b) hold the sequence  $(\row k{m})$ is determined uniquely. The last $m$th level consists entirely of dummies if and only if $k_{m-1}=k_m$.
\end{theorem}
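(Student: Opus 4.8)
The plan is to deduce Theorem~\ref{Thm_2} from its disjunctive counterpart, Theorem~\ref{Thm_1}, through the duality of Theorem~\ref{all-exists-dual}. The starting point is that $H=H_\forall({\bf n},{\bf k})$ and its dual $H^{*}=H_\exists({\bf n},{\bf k}^{*})$ have the same Isbel desirability relation, since $\succeq$ is self-dual, and therefore the same equivalence classes. Consequently $\bar H$ is defined on $\bar P=\{1^{n_1},\ldots,m^{n_m}\}$ with exactly $m$ classes if and only if $\bar{H^{*}}$ is. It thus suffices to decide when the disjunctive game $H_\exists({\bf n},{\bf k}^{*})$ has $m$ equivalence classes and then transport the answer back through the involution ${\bf k}\mapsto{\bf k}^{*}$, using ${\bf k}^{**}={\bf k}$.

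The key computation is to re-express conditions (a),(b) of Theorem~\ref{Thm_2} in terms of ${\bf k}^{*}$. Writing $N_i=\sum_{j\le i}n_j$ and $k_i^{*}=N_i-k_i+1$, one checks that $k_1\le n_1\Leftrightarrow k_1^{*}\ge 1$ and, as already observed in the proof of Theorem~\ref{all-exists-dual}, that $k_i<k_{i-1}+n_i\Leftrightarrow k_{i-1}^{*}<k_i^{*}$. Hence (a) and (b) hold for ${\bf k}$ exactly when ${\bf k}^{*}$ is a strictly increasing sequence of positive integers, i.e. exactly when $H_\exists({\bf n},{\bf k}^{*})$ is a genuine disjunctive hierarchical game in the sense of Definition~\ref{HG}. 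Moreover, under the standing hypothesis $k_1<\cdots<k_{m-1}$ and $k_1\ge 1$, the conditions demanded of ${\bf k}^{*}$ by Theorem~\ref{Thm_1}, namely $k_1^{*}\le n_1$ and $k_i^{*}<k_{i-1}^{*}+n_i$ for $1<i<m$, reduce to $k_1\ge 1$ and $k_1<\cdots<k_{m-1}$ and so hold automatically. Therefore, whenever (a),(b) of Theorem~\ref{Thm_2} hold, Theorem~\ref{Thm_1} applies to ${\bf k}^{*}$ and yields $m$ classes for $H_\exists({\bf n},{\bf k}^{*})$, hence for $H$.

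For the converse I must show that a failure of (a) or (b) forces $H$ to have fewer than $m$ classes. This is the part that does not come directly from Theorem~\ref{Thm_1}, because a failure makes ${\bf k}^{*}$ non-increasing or non-positive and so falls outside Definition~\ref{HG}; I expect this to be the main obstacle. If $k_1>n_1$ then $k_1^{*}\le 0$, the first disjunctive clause is vacuously satisfied, and $H^{*}$ collapses to a trivial game with a single class. If (b) fails for some $1<i\le m$, then $k_{i-1}^{*}\ge k_i^{*}$, so the clause $|X\cap(\cup_{j\le i-1}P_j)|\ge k_{i-1}^{*}$ is implied by $|X\cap(\cup_{j\le i}P_j)|\ge k_i^{*}$ and is redundant; deleting it, the players of $P_{i-1}$ and $P_i$ occur in exactly the same remaining clauses and contribute symmetrically to each, so $P_{i-1}\sim_{H^{*}}P_i$ and $H^{*}$ has fewer than $m$ classes. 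This redundant-clause merging argument will require the most care; alternatively one may prove it directly on the conjunctive side by exhibiting, as in the proof of Theorem~\ref{Thm_1}, a witness that $P_{i-1}\sim_{H}P_i$, which is an elementary counting check.

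Finally, uniqueness and the dummy statement transfer through the same dictionary. Since the game determines its dual and, by Theorem~\ref{Thm_1}, the valid sequence ${\bf k}^{*}$ is recovered uniquely from $H^{*}$ as the cardinalities of its minimal winning coalitions, the relation $k_i=N_i-k_i^{*}+1$ pins down ${\bf k}$ uniquely. For the last level, Theorem~\ref{Thm_1} makes $P_m$ consist of dummies in $H^{*}$ exactly when $k_m^{*}\ge k_{m-1}^{*}+n_m$, which translates to $k_m\le k_{m-1}$ and, with the hypothesis $k_{m-1}\le k_m$, to $k_{m-1}=k_m$. Because being a dummy is a self-dual property (the condition $X\cup\{p\}\in W\Leftrightarrow X\in W$ for all $X\not\ni p$ is invariant under complementation), $P_m$ consists of dummies in $H$ under precisely the same condition, completing the argument.
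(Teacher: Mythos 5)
Your proof is correct and follows essentially the same route as the paper's: both deduce the statement from Theorem~\ref{Thm_1} via the duality of Theorem~\ref{all-exists-dual} and the self-duality of Isbel's relation, translating conditions (a) and (b) into the positivity and strict monotonicity of ${\bf k}^{*}$ while observing that the hypotheses of Theorem~\ref{Thm_1} for ${\bf k}^{*}$ hold automatically. You are in fact more explicit than the paper about the converse direction (where ${\bf k}^{*}$ fails to be a valid disjunctive parameter sequence) and about transferring the uniqueness and dummy statements; the only slip is that the redundant clause $i-1$ \emph{implies} clause $i$ rather than being implied by it, which does not affect your conclusion.
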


\begin{proof}
This is a direct consequence of duality and Theorem~\ref{Thm_1}. Indeed we have $k^{*}_i<k^{*}_{i-1}+n_i$ if and only if $k_{i-1}<k_i$ and $k^{*}_1\le n_1$ is equivalent to $k_1>0$, $k^{*}_1>0$ is equivalent to $k_1\le n_1$ and $k^{*}_{i-1}<k^{*}_{i}$ is equivalent to $k_i<k_{i-1}+n_i$. 

To prove the second statement we use duality and the fact that ${\bf k}^{{*}{*}}={\bf k}$.
\end{proof}

We will need the following two propositions.


\begin{proposition}
\label{cut_tail}
Let  ${\bf n}=(\row n{m})$, ${\bf k}=(\row k{m})$ and $G=H_\exists({\bf n},{\bf k})$. If  ${\bf n}'=(\row n{m{-}1})$, ${\bf k}'=(\row k{m{-}1})$, then $H({\bf n}',{\bf k}')$ is a subgame $G_A$ of  $G$  for $A=\{m^{n_m}\}$.
\end{proposition}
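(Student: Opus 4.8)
The plan is to unwind both games on the common ground set and verify that their winning coalitions coincide. First I would identify the ground set of the subgame: since $A=\{m^{n_m}\}$ is the entire top level, its complement is $A^c=\{1^{n_1},2^{n_2},\ldots,(m-1)^{n_{m-1}}\}$, which is exactly the multiset on which the $(m-1)$-level game $H_\exists({\bf n}',{\bf k}')$ lives. A submultiset $X\subseteq A^c$ has the form $X=\{1^{\ell_1},\ldots,(m-1)^{\ell_{m-1}}\}$, i.e. it contains no level-$m$ player, so $\ell_m=0$. By definition of the subgame, $X\in W_{\text{sg}}$ iff $X\in W$, and by Definition~\ref{HG} this means that $\sum_{j=1}^{i}\ell_j\ge k_i$ holds for some $i\in[m]$. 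The goal is to show that this is equivalent to the winning condition of $H_\exists({\bf n}',{\bf k}')$, namely that $\sum_{j=1}^{i}\ell_j\ge k_i$ holds for some $i\in[m-1]$.

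The implication from $[m-1]$ to $[m]$ is trivial, so the whole content lies in the reverse direction. Suppose $\sum_{j=1}^{i}\ell_j\ge k_i$ for some $i\in[m]$. If $i\le m-1$ there is nothing to prove, so the only case to treat is $i=m$. Here, using $\ell_m=0$, we get $\sum_{j=1}^{m-1}\ell_j=\sum_{j=1}^{m}\ell_j\ge k_m$. Since the thresholds of a disjunctive game satisfy $k_{m-1}<k_m$ (this strict inequality holds even in the canonical convention where the top level consists of dummies, as there $k_m=k_{m-1}+n_m$ with $n_m\ge 1$), it follows that $\sum_{j=1}^{m-1}\ell_j\ge k_m>k_{m-1}$, so the threshold is already met at index $i=m-1$. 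Thus on coalitions of $A^c$ the top-level condition collapses into the $(m-1)$-st one, and the two winning families coincide.

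The single point needing care — and the only real obstacle — is precisely this collapse: one must observe that deleting the top level loses no winning coalition, because any lower-level coalition meeting the $m$-th threshold automatically meets the strictly smaller $(m-1)$-st threshold. For completeness I would also note that ${\bf n}'$ and ${\bf k}'$ inherit conditions (a) and (b) of Theorem~\ref{Thm_1} from ${\bf n}$ and ${\bf k}$, since the truncated inequalities $k_1\le n_1$ and $k_i<k_{i-1}+n_i$ for $1<i<m-1$ form a subset of the original ones; hence $H_\exists({\bf n}',{\bf k}')$ is a genuine $(m-1)$-level disjunctive hierarchical game in canonical form, and the identification $G_A=H_\exists({\bf n}',{\bf k}')$ is legitimate.
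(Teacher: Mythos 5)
Your argument is correct and complete; the paper in fact states Proposition~\ref{cut_tail} without proof, and your unwinding of the definitions is exactly the verification one would expect. The key observation — that a coalition with $\ell_m=0$ meeting the $m$-th threshold automatically meets the strictly smaller $(m-1)$-st one, so no winning coalitions of $A^c$ are lost — is the right (and only) point of substance, and your check that ${\bf n}'$, ${\bf k}'$ inherit the canonical-form conditions is a sensible addition.
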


\begin{proposition}\label{reduced}
Let  ${\bf n}=(\row n{m})$, ${\bf k}=(\row k{m})$ and $G=H_\forall({\bf n},{\bf k})$.
Suppose $k_1=n_1$, ${\bf n}'= (n_1, \ldots, n_m)$, and ${\bf
k}'=(k_2-k_1, \ldots, k_m-k_1)$. Then $H_\forall({\bf n}',{\bf k}')$ is a reduced
game $G^{A}$, where $A = \{1^{n_1}\}$.
\end{proposition}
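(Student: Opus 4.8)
The plan is to read the winning coalitions of the reduced game $G^{A}$ directly off the definition of $W_{\text{rg}}$ and recognise them as the winning coalitions of a conjunctive hierarchical game on shifted parameters. First I would record that for $A=\{1^{n_1}\}$ the complementary multiset is $A^{c}=\{2^{n_2},\ldots,m^{n_m}\}$, so the reduced game lives on the last $m-1$ levels. I then take an arbitrary submultiset $X=\{2^{\ell_2},\ldots,m^{\ell_m}\}\subseteq A^{c}$; by definition $X\in W_{\text{rg}}$ exactly when $X\cup A=\{1^{n_1},2^{\ell_2},\ldots,m^{\ell_m}\}$ is winning in $G=H_\forall({\bf n},{\bf k})$, i.e.\ when the multiplicity sum $\sum_{j\in[i]}$ of $X\cup A$ is at least $k_i$ for every $i\in[m]$.

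Next I would analyse these $m$ inequalities one index at a time. For $i=1$ the inequality reads $n_1\ge k_1$, which holds with equality by the hypothesis $k_1=n_1$; hence the first constraint is automatically satisfied and may be discarded. For each $i$ with $2\le i\le m$ the relevant multiplicity sum equals $n_1+\sum_{j=2}^{i}\ell_j$, so the constraint is $n_1+\sum_{j=2}^{i}\ell_j\ge k_i$, equivalently $\sum_{j=2}^{i}\ell_j\ge k_i-n_1=k_i-k_1$. Re-indexing the surviving levels $2,\ldots,m$ as $1,\ldots,m-1$, these are precisely the defining inequalities of the conjunctive hierarchical game $H_\forall({\bf n}',{\bf k}')$ with ${\bf n}'=(n_2,\ldots,n_m)$ and ${\bf k}'=(k_2-k_1,\ldots,k_m-k_1)$. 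Thus $X\in W_{\text{rg}}$ if and only if $X$ is winning in $H_\forall({\bf n}',{\bf k}')$, which is the claim. To be sure the target is a legitimate conjunctive game I would check that ${\bf k}'$ is an admissible threshold sequence: $k'_1=k_2-k_1\ge 1$ because $k_1<k_2$, the chain $k'_1<\cdots<k'_{m-2}$ follows from $k_2<\cdots<k_{m-1}$, and $k'_{m-2}\le k'_{m-1}$ follows from $k_{m-1}\le k_m$, everything obtained by subtracting the constant $k_1$.

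The computation is essentially routine bookkeeping, so the only real content — and the step I would highlight — is the use of the hypothesis $k_1=n_1$: it is exactly what forces the level-$1$ constraint to be met with equality once all of $P_1$ sits inside $A$, making that constraint vacuous in the reduced game and letting the remaining thresholds drop uniformly by $k_1$. Were $k_1<n_1$ the same argument would still go through but would yield thresholds $k_i-n_1$, which is why the stated parametrisation $k_i-k_1$ requires the equality.

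I note in passing that, although this is the conjunctive counterpart of Proposition~\ref{cut_tail}, it does not follow from it by duality: since $(G^{A})^{*}=(G^{*})_A$ with $A=\{1^{n_1}\}$, the first-level reduction here corresponds to a \emph{first}-level subgame of the dual disjunctive game, whereas Proposition~\ref{cut_tail} removes the \emph{last} level. For this reason I would give the direct argument above rather than routing through the duality of Theorem~\ref{all-exists-dual}.
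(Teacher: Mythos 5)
Your proof is correct. The paper states Proposition~\ref{reduced} without proof, so there is no argument to compare against; your direct verification --- discarding the level-$1$ constraint, which holds with equality because $k_1=n_1$ and all of $P_1$ lies in $A$, and lowering the remaining thresholds by $k_1$ --- is exactly the routine computation the authors evidently had in mind. Two minor points in your favour: you silently correct the paper's typo ${\bf n}'=(n_1,\ldots,n_m)$ to ${\bf n}'=(n_2,\ldots,n_m)$, consistent with how the proposition is invoked in case~(4) of the conjunctive weightedness theorem, and your closing remark is apt that $(G^{A})^{*}=(G^{*})_A$ would require a first-level analogue of Proposition~\ref{cut_tail}, so the duality shortcut is not available as stated.
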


\section{Characterizations of Disjunctive Hierarchical Games}

Firstly, we will obtain a structural characterization of hierarchical disjunctive games.

\begin{theorem}
\label{shift-maximal-losing}
The class of disjunctive hierarchical simple games is exactly the class of complete games with a unique shift-maximal losing coalition. 
\end{theorem}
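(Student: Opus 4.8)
The plan is to work throughout with the canonical (multiset) representation on $\bar P=\{1^{n_1},\ldots,m^{n_m}\}$, where $1\succ_{G}2\succ_{G}\cdots\succ_{G}m$, since completeness, shift-maximality and the hierarchical structure all transfer between a game and its canonical representation. For two submultisets $X=\{1^{x_1},\ldots,m^{x_m}\}$ and $Y=\{1^{y_1},\ldots,m^{y_m}\}$ I write $X\sqsubseteq Y$ when $\sum_{j=1}^{i}x_j\le\sum_{j=1}^{i}y_j$ for every $i\in[m]$; this is the dominance order induced by partial sums along the hierarchy, and the whole proof is organised around it.

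First I would record two facts about an \emph{arbitrary} complete game. (i) The losing coalitions are closed under deletion of a player and under a single shift (replacement of a player by a less desirable one): deletion preserves losing by monotonicity, and if a shift applied to a losing coalition $Y$ produced a winning $X$, then Isbell's relation applied to the two players involved would force $Y$ itself winning, a contradiction. (ii) $X\sqsubseteq Y$ holds exactly when $X$ is obtainable from $Y$ by a sequence of shifts and deletions; the forward direction is immediate since each elementary move weakly decreases every partial sum, and the converse is a routine induction on the total gap $\sum_{i}\sum_{j\le i}(y_j-x_j)$ (each deletion or down-shift strictly decreases it while domination is maintained). Combining (i) and (ii), in any complete game the losing coalitions form a $\sqsubseteq$-down-set, and therefore the shift-maximal losing coalitions are precisely the $\sqsubseteq$-maximal losing ones. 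I would verify this last equivalence both ways: a $\sqsubseteq$-maximal losing coalition admits neither a losing addition nor a losing up-shift, hence is shift-maximal; conversely, if $Z\sqsubset Z'$ with $Z'$ losing, the first elementary step of a witnessing chain is a losing addition or a losing up-shift of $Z$, contradicting shift-maximality.

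With this machinery the forward inclusion is short. A disjunctive game $H_\exists(\mathbf n,\mathbf k)$ is complete (noted after Definition~\ref{HG}), its losing coalitions are exactly those $X$ with $\sum_{j\le i}x_j\le k_i-1$ for all $i$, and this set is precisely $\{X:X\sqsubseteq Y^{*}\}$ for $Y^{*}=\{1^{k_1-1},2^{k_2-k_1},\ldots,m^{k_m-k_{m-1}}\}$. Hence $Y^{*}$ is the unique $\sqsubseteq$-maximal losing coalition, so by the equivalence above it is the unique shift-maximal losing coalition.

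For the converse, let $G$ be complete with a unique shift-maximal losing coalition $Y^{*}=\{1^{\ell_1},\ldots,m^{\ell_m}\}$. By the corollary the losing coalitions are exactly $\{X:X\sqsubseteq Y^{*}\}$, so $X$ is winning iff $\sum_{j\le i}x_j\ge k_i$ for some $i$, where $k_i:=1+\sum_{j\le i}\ell_j$; this is verbatim the recipe of Definition~\ref{HG}, provided the $k_i$ are positive and strictly increasing. Positivity and $k_i-k_{i-1}=\ell_i\ge 0$ are immediate, so \textbf{the single genuine obstacle is proving $\ell_i\ge 1$ for every $2\le i\le m$}, i.e. that the unique shift-maximal losing coalition meets every intermediate level. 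Here I would exploit that the levels are genuine $\sim_G$-classes: $i{-}1\succ_{G}i$ furnishes a submultiset $A$, a level-$(i{-}1)$ player $p$ and a level-$i$ player $q$ with $A\cup\{p\}$ winning while $A\cup\{q\}$ is losing. Feeding the three facts that $A$ is losing, that $A\cup\{q\}$ is losing, and that $A\cup\{p\}$ is winning into the partial-sum description of losing coalitions (with $\ell_i=0$ forcing $\sum_{j\le i}\ell_j=\sum_{j\le i-1}\ell_j$) produces a numerical contradiction. This secures the strict chain $k_1<\cdots<k_m$, and hence $G=H_\exists(\mathbf n,\mathbf k)$, completing the characterization.
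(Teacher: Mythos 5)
Your proof is correct, and while it lands in the same place as the paper's (canonical multiset representation, the explicit coalition $\{1^{k_1-1},2^{k_2-k_1},\ldots\}$ in one direction, and thresholds $k_i=1+\sum_{j\le i}\ell_j$ read off the unique shift-maximal losing coalition in the other), it is organised quite differently. The paper never isolates your central lemma --- that in a complete game the losing coalitions form a down-set for the partial-sum dominance order, whose maximal elements are exactly the shift-maximal losing coalitions --- and instead uses it tacitly; making it explicit buys you two things. First, in the forward direction uniqueness of the shift-maximal losing coalition comes for free (the paper only checks that $M$ \emph{is} shift-maximal losing and leaves uniqueness implicit). Second, in the converse the two proofs attack different inequalities: the paper proves $\ell_i<n_i$ for $i<m$ by producing, from a maximal-cardinality witness of $i\succ_G i{+}1$, a second shift-maximal losing coalition, whereas you prove $\ell_i\ge 1$ for $i\ge 2$ by partial-sum arithmetic on a witness of $i{-}1\succ_G i$. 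Your inequality is the one actually required for $k_1<\cdots<k_m$ to be a legitimate parameter sequence in Definition~\ref{HG} --- a point the paper's ``it is easy to see'' glosses over --- while the paper's inequality $\ell_i<n_i$ (needed only for canonicity) follows a posteriori in your setup and need not be proved directly. The price you pay is the ``$X\sqsubseteq Y$ iff $X$ is reachable by deletions and down-shifts'' lemma, which is indeed routine but does require the small amount of care with level capacities ($y_t<n_t$ at the receiving level) that you allude to; spelled out, your argument is complete.
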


\begin{proof}
Let $G=H_\exists ({\bf n},{\bf k})$ be an $m$-level hierarchical game. If $k_m <  k_{m-1}+n_m$, then the following coalition is a shift-maximal losing one:
\begin{equation}
\label{shift_maximal}
M=\{1^{k_1-1},2^{k_2-k_1},\ldots, m^{k_{m}-k_{m-1}}\}.
\end{equation}
Indeed, for every $i=1,2,\ldots,m$ it has $k_i-1$ players from the first $i$ levels, and so any replacement of a player with more influential one makes it winning. If $k_m \ge  k_{m-1}+n_m$, then it has to be modified as
\begin{equation}
\label{shift_maximal2}
M=\{1^{k_1-1},2^{k_2-k_1},\ldots, (m-1)^{k_{m-1}-k_{m-2}}, m^{n_m}\}.
\end{equation}
Suppose now that $G$ is complete, has a canonical multiset representation on a multiset $P=\{1^{n_1},2^{n_2}, \ldots, m^{n_m}\}$ and has a unique shift-maximal losing coalition $M=\{1^{\ell_1},2^{\ell_2}, \ldots, m^{\ell_m}\}$. We claim that $\ell_i<n_i$ for all $1\le i<m$. Suppose not.  We know that there exists a multiset $X$ such that $X\cup \{i\}$ is winning but $X\cup \{i+1\}$ is losing. We first take $X$ to be of maximal possible cardinality first, and then shift-maximal with this property. This  will make 
$X\cup \{i+1\}$ a shift-maximal losing coalition. Indeed, we cannot add any more elements to $X$, and replacement any element of it with the more influential one makes it winning. Since $X\cup \{i+1\}$ is not equal to $M$ (the multiplicity of $i$ is not at full capacity) we get a contradiction. Hence $\ell_i<n_i$. Then $\{1^{\ell_1},\ldots,  (i-1)^{\ell_{i-1}}, i^{\ell_i+1}\}$ must be winning. Then every coalition with $k_i=\ell_1+\ldots+\ell_i+1$ player from the first $i$ levels is winning. Now if $\ell_m=n_m$ we set $k_m=k_{m-1}+n_m$, alternatively we set $k_m=\ell_1+\ldots+\ell_m+1$. 
It is easy to see that  $G$ is in fact $H_\exists({\bf n},{\bf k})$.
\end{proof}

\cite{beimel:360}  characterized ideal weighted threshold secret-sharing schemes. As part of this characterization they characterized hierarchical weighted games. However, their proof is indirect, and relies heavily upon the connection between ideal secret-sharing schemes and matroids.  Here we will prove the following theorem, which is slightly more general than their Claim 6.5. In secret-sharing dummies are not allowed to be present, so they have at most three levels, not four.

\begin{theorem}
\label{BTW}
Let $G=H_\exists({\bf n},{\bf k}) $ be an $m$-level hierarchical simple game. Then $G $ is a weighted majority game iff one of the following conditions is satisfied:
\begin{enumerate}
\item[(1)] $m=1$;
\item[(2)] $m=2$ and $k_2=k_1+1$;
\item[(3)] $m=2$ and $n_2= k_2-k_1+1$;
\item[(4)] $m\in \{2,3\}$ and $k_1=1$. When $m=3$, $G$ is weighted if and only if the subgame $H_\exists({\bf n}',{\bf k}')$, where ${\bf n}'=(n_2,n_3)$ and ${\bf k}'=(k_2,k_3)$ falls under (2) or (3);
\item[(5)] $m\in \{2,3,4\}$,  $k_m\ge k_{m-1}+n_m$,  and the subgame $H_\exists({\bf n}',{\bf k}')$, where ${\bf n}'=(n_1,\ldots, n_{m-1})$ and ${\bf k}'=(k_1,\ldots, k_{m-1})$ falls under one of the (1) -- (4);
\end{enumerate}
\end{theorem}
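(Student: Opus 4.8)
The plan is to prove both directions at once by \emph{reducing} every hierarchical game to a \emph{core} game having no passer (a level-$1$ player with $k_1=1$) and no dummy top level (i.e.\ $k_m<k_{m-1}+n_m$), and then settling the core games directly. This is the mechanism that converts the mutually recursive clauses (4) and (5) into an honest induction on $m$. First I would record two reduction lemmas. \emph{Passer reduction}: if $k_1=1$, then the subgame $G_A$ with $A=P_1$ is exactly $H_\exists((n_2,\dots,n_m),(k_2,\dots,k_m))$, and $G$ is weighted iff this subgame is. The forward direction is immediate because subgames of weighted games are weighted; for the converse one keeps a weighted representation of the subgame and assigns each level-$1$ player weight equal to the quota, making any single such player a passer. \emph{Dummy reduction}: if the top level consists of dummies, then by Proposition~\ref{cut_tail} the subgame $G_A$ with $A=\{m^{n_m}\}$ is $H_\exists((n_1,\dots,n_{m-1}),(k_1,\dots,k_{m-1}))$, and $G$ is weighted iff it is (forward as before, converse by giving the dummies weight $0$). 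Since a deleted passer leaves a new first threshold $k_2\ge 2$ and since property (b) forbids a second dummy after the top one is removed, each reduction applies at most once; applying both yields a core game $G_0$ on $m_0\ge m-2$ levels with $G$ weighted iff $G_0$ is.

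Next I would settle the core games by size. For $m_0=1$ the game is an ordinary threshold game (all weights $1$), hence weighted; this is clause (1). For $m_0=2$, if $k_2=k_1+1$ (clause (2)) the weights $w_1=(k_1+1)/k_1$, $w_2=1$ with quota $k_1+1$ work, and if $n_2=k_2-k_1+1$ (clause (3)) the weights $w_1=n_2$, $w_2=1$ with quota $k_1n_2$ work; in the remaining core case $k_1\ge 2$, $k_2\ge k_1+2$, $n_2\ge k_2-k_1+2$ I would exhibit a certificate of non-weightedness. The crux is then the claim that \emph{every} core game with $m_0\ge 3$ is non-weighted. For this I use that iterating Proposition~\ref{cut_tail} exhibits the first three levels $H_\exists((n_1,n_2,n_3),(k_1,k_2,k_3))$ as a subgame, which is again a three-level core game (inheriting $k_1\ge2$, and having $k_3<k_2+n_3$ by property (b) when $m_0>3$ and by the no-dummy hypothesis when $m_0=3$); so it suffices to treat the three-level case and invoke once more that subgames of weighted games are weighted.

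The key combinatorial step is an explicit three-level certificate. Writing $d_2=k_2-k_1$ and $d_3=k_3-k_2$, I would take the trading transform $(X_1,X_2;Y_1,Y_2)$ with winning coalitions $X_1=\{1^{k_1}\}$ and $X_2=\{1^{k_1-2},2^{d_2+1},3^{d_3+1}\}$ (the latter has $k_2-1$ players in the first two levels, so it loses at level $2$, but $k_3$ players overall, so it wins at level $3$) against losing coalitions $Y_1=\{1^{k_1-1},2^{d_2},3^{d_3}\}$ (the unique shift-maximal losing coalition of Theorem~\ref{shift-maximal-losing}) and $Y_2=\{1^{k_1-1},2^{1},3^{1}\}$. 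One checks $X_1+X_2=Y_1+Y_2$ as multisets, and that the multiplicities stay within capacity \emph{precisely} because $k_1\ge 2$ (no passer, giving $k_1-2\ge 0$ and allowing the top level to split into two losing halves), $d_2\le n_2-1$ (property (b)), and $d_3\le n_3-1$ (no dummy); that $Y_2$ loses uses only $k_3\ge k_1+2$, which always holds. By the trading-transform criterion this certifies non-weightedness. The analogous shorter transform $(\{1^{k_1}\},\{1^{k_1-2},2^{d_2+2}\};\{1^{k_1-1},2^{d_2}\},\{1^{k_1-1},2^{2}\})$ handles the non-weighted two-level core case.

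Finally I would translate the core dichotomy back through the reductions to recover exactly (1)--(5), tracking how many reductions were applied: no passer and no dummy forces $m\le 2$, giving clauses (1)--(3); a passer alone forces $m\le 3$ and, when $m=3$, that the passer-free subgame be of type (2) or (3), which is clause (4); a dummy alone, or a dummy together with a passer, forces $m\le 4$ with the $(m-1)$-level subgame of type (1)--(4), which is clause (5) (the value $m=4$ arising exactly when that subgame still carries a passer). The main obstacle is the three-level certificate: the rest is reduction bookkeeping, but producing a single trading transform that simultaneously respects all three capacity bounds for arbitrary $k_1,k_2,k_3$ is the real content, and it is exactly where the hypotheses $k_1\ge 2$ and the no-dummy condition are consumed.
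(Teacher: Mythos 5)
Your proof is correct, and its skeleton matches the paper's: peel off a passer level ($k_1=1$) and a dummy top level ($k_m\ge k_{m-1}+n_m$), settle the one- and two-level core games, and kill every core game with three or more levels by a trading transform. Two of your execution choices genuinely diverge from the paper, both to your advantage. First, for the positive two-level cases (2) and (3) you simply exhibit weight systems ($w_1=(k_1+1)/k_1,\,w_2=1,\,q=k_1+1$, resp.\ $w_1=n_2,\,w_2=1,\,q=k_1n_2$ --- both check out), whereas the paper argues the non-existence of a certificate of non-weightedness via Theorem~\ref{FM}, which requires a careful accounting of which shift-minimal winning coalitions can occur; your route is shorter and avoids that bookkeeping (likewise your passer reduction gives the converse by assigning level-$1$ players weight $q$, where the paper again argues through certificates). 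Second, and more substantially, the paper disposes of the three-level core case by a three-way split on whether $k_3\le n_3$, $n_3<k_3\le n_2+n_3$, or $k_3>n_2+n_3$, with a different certificate in each subcase; your single transform
\[
\bigl(\{1^{k_1}\},\{1^{k_1-2},2^{k_2-k_1+1},3^{k_3-k_2+1}\};\ \{1^{k_1-1},2^{k_2-k_1},3^{k_3-k_2}\},\{1^{k_1-1},2,3\}\bigr)
\]
is a valid trading transform whose capacity constraints are exactly the canonical-form inequalities $k_1\ge 2$, $k_2<k_1+n_2$, $k_3<k_2+n_3$, and whose losing side is losing because $Y_1$ is the shift-maximal losing coalition and $|Y_2|=k_1+1<k_3$; this collapses the paper's case analysis into one certificate. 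The only place where you should be a little more explicit is the final ``bookkeeping'' paragraph: the equivalence between ``which reductions were applied'' and the literal clauses (1)--(5) deserves a line or two (e.g.\ that a removed dummy level forces the $(m-1)$-level subgame into (1)--(4), which is exactly clause (5), and that condition (b) of Theorem~\ref{Thm_1} guarantees each reduction fires at most once), but the mathematics behind it is already in place.
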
 

\begin{proof}
We will prove this theorem using the combinatorial technique of trading transforms.
If $k_m\ge  k_{m-1}+n_m$, then users of the last level are dummies and they never participate in any minimal winning coalition. As a result,  if there exists a certificate of non-weightedness
\begin{equation}
\label{Tt1}
{\cal T}=(\row Xj;\row Yj)
\end{equation}
with minimal winning coalitions $\row Xj$, which exist by Theorem~\ref{FM}, then no dummies may be found in any of the  $\row Xj$, hence they are not participating in this certificate. 
Hence $G $ is weighted if and only if its subgame $H_\exists({\bf n}',{\bf k}')$, where ${\bf n}'=(n_1,\ldots, n_{m-1})$ and ${\bf k}'=(k_1,\ldots, k_{m-1})$ is weighted. So we reduce our theorem to the case without dummies, and in this case we have to prove that $G$ falls under the one of the cases (1)-(4). Let us assume that $k_m <   k_{m-1}+n_m$.\par

If $k_1=1$, then every user of the first level is self-sufficient (passer), that is, any coalition with participation of this agent is winning. If a certificate of non-weightedness~(\ref{Tt1}) exists, then a $1$ cannot be a member of any set $\row Xj$, since then it will have to be also in one of the $\row Yj$ and at least one of them will not be losing. Hence $G $ is weighted if and only if its  subgame $H_\exists({\bf n}',{\bf k}')$, where ${\bf n}'=(n_2,\ldots, n_m)$ and ${\bf k}'=(k_2,\ldots, k_m)$  is weighted.

Now we assume $k_1\ge 2$. The case $m=1$ is trivial.  Next we show that if we are restricted to two levels such that condition (5) is not met but any one of the two conditions (2) and (3) is met, then $G $ is weighted. So we assume that  $m = 2$ and $k_1\ge 2$.  If $k_2 \ge k_1+n_2$ we have case (5); so suppose $k_1\le n_1$ and $k_2 < k_1+n_2$. If $k_2=k_1+1$ then this leads to weightedness. Indeed, suppose we have a certificate of non-weightedness~(\ref{Tt1}) with $\row Xj$ winning and $\row Yj$ losing coalitions. We have then $|X_i|\ge k_1$ and $|Y_i|<k_2$ for all $i$. Thus we have $|X_i|=|Y_j|=k_1$ for all $i,j$. Since $|X_i|=k_1$ and winning, it must be $X_i=\{1^{k_1}\}$ for all $i$. But this will imply that  $Y_i=\{1^{k_1}\}$ for all $i$, which is an absurd as $Y_i$ must be losing. \par
 
Now we show that $m=2$ together with $n_2= k_2-k_1+1$ (we note that by Theorem~\ref{Thm_1} this is the smallest value that $n_2$ can take) implies $G $ is a weighted majority game. Assume towards a contradiction that $G $ is not weighted, then there exists a certificate of non-weightedness~(\ref{Tt1}).  If $k_2 = k_1 + 1$,  we know that $G $ is weighted. So assume that $k_2 \geq k_1 + 2$. The first shift-minimal winning coalition is $\{1^{k_1}\}$. As $k_1>1$, it follows that $n_2 < k_2$,  which implies that $\{2^{k_2}\}$ is not a legitimate coalition.  As $k_2-n_2=k_1-1$, the second shift-minimal winning  coalition is therefore $\{1^{k_2-n_2},2^{n_2}\}=\{1^{k_1-1},2^{n_2}\}$. There are no other cases.

In the certificate of non-weightedness (\ref{Tt1}) we may assume that $\row Xj$ are shift-minimal, that is of the two types described earlier.  It is obvious that no $\{1^{k_1}\}$ can be among $\row Xj$. Hence $X_1=\ldots =X_j=\{1^{k_1-1},2^{n_2}\}$. It is now clear that we cannot distribute all ones and twos among $\row Yj$ so that they are all losing.

Conversely, we show that if all conditions (1)-(3) fail, then $G $ is not weighted. If $m=2$, this means that $k_2\ge k_1+2$ and $n_2\ge k_2-k_1+2$. In this case the game possesses  the following certificate of non-weightedness:
\begin{align*}
&(\{1^{k_1}\}, \{1^{{k_1}-2}, 2^{k_2-k_1+2}\};\\  
&\{1^{{k_1}-1}, 2^{\lfloor (k_2-k_1+2)/{2}\rfloor}\}, \{1^{{k_1}-1}, 2^{\lceil (k_2-k_1+2)/{2}\rceil}\}).
\end{align*}
Since $n_2\ge k_2-k_1+2$,  all the coalitions are well-defined. Also, the restriction $k_2\ge k_1+2$ secures that $\lceil \frac{k_2-k_1+2}{2}\rceil \le k_2-k_1$ and makes both multisets in the right-hand-side of the trading transform losing.\par\medskip

Now suppose $m\ge 3$, $k_1 \geq 2$ and the condition (5) is not applicable. We may also assume that  $k_1\le n_1$, $k_2<k_1+n_2$ and $k_3<k_2+n_3$. Suppose first that $k_3\le n_3$. Then, since $k_3\ge k_2+1\ge k_1+2\ge 4$, the following is a certificate of non-weightedness. 
\[
(\{1^{k_1}\},\{3^{k_3}\};\{1^{k_1-1},3^2\},\{1,3^{k_3-2}\}).
\]
Suppose $k_3>n_3$. If at the same time $k_3\le n_2+n_3$, then since $k_3-n_3<  k_2$ we have a legitimate certificate of non-weightedness 
\[
(\{1^{k_1}\},\{2^{k_3-n_3},3^{n_3}\};\{1^{k_1-1},2,3\},\{1,2^{k_3-n_3-1},3^{n_3-1}\}).
\]
Finally, if $k_3>n_3$ and $k_3 > n_2+n_3$, then the certificate of non-weightedness will be
\begin{align*}
&(\{1^{k_1}\},\{1^{k_3-n_2-n_3},2^{n_2},3^{n_3}\};\\
&\{1^{k_1-1},2,3\},\{1^{k_3-n_2-n_3+1},2^{n_2-1},3^{n_3-1}\}).
\end{align*}
All we have to check is that the second coalition of the losing part is indeed losing. To show this we note that $k_3-n_3<k_2$ and $k_3-n_2-n_3+1<k_2-n_2+1 \leq k_1$.  This shows that the second coalition of the losing part is indeed losing and proves the theorem.
\end{proof}

\section{Characterizations of Conjunctive Hierarchical Games}

First we obtain a structural characterization of conjunctive hierarchical
games.

\begin{theorem}
\label{shift-minimal-winning}
The class of conjuctive hierarchical simple games is exactly the class of complete
games with a unique shift-minimal winning coalition.
\end{theorem}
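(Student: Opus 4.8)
The plan is to deduce this theorem from its disjunctive counterpart, Theorem~\ref{shift-maximal-losing}, by passing to the dual game and exploiting the machinery already in place: Theorem~\ref{all-exists-dual} identifies the dual of a hierarchical game as a hierarchical game of the opposite type, completeness is a self-dual property because Isbel's relation $\succeq$ is self-dual, and $G=G^{**}$. The whole argument then reduces to matching up the two distinguished coalitions under complementation.

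The one genuinely new ingredient I would isolate as a lemma is the following: for a complete game $G$, the map $X\mapsto X^{c}$ is a bijection between the shift-minimal winning coalitions of $G$ and the shift-maximal losing coalitions of $G^{*}$. To prove it, recall that $X$ is winning in $G$ iff $X^{c}$ is losing in $G^{*}$, so complementation already swaps the winning and losing roles. For the containment half, $Z\subsetneq X$ is losing in $G$ iff $Z^{c}\supsetneq X^{c}$ is winning in $G^{*}$; hence ``every proper subcoalition of $X$ is losing'' translates exactly into ``every coalition strictly containing $X^{c}$ is winning,'' which is the containment requirement for $X^{c}$ to be shift-maximal losing.

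The shift half is the step I expect to demand the most care, since it runs in the reverse direction. If $X'=(X\setminus\{p\})\cup\{q\}$ is a shift of $X$, so that $p\in X$, $q\notin X$ and $q\prec_{G}p$, then $(X')^{c}=(X^{c}\setminus\{q\})\cup\{p\}$ is obtained from $X^{c}$ by replacing $q$ with the more desirable $p$; equivalently, $X^{c}$ is obtained from $(X')^{c}$ by a shift. Because $\succeq$ is self-dual we also have $q\prec_{G^{*}}p$, so this is a legitimate shift in $G^{*}$. Thus the shifts $X'$ of $X$ correspond bijectively to the coalitions $(X')^{c}$ from which $X^{c}$ arises by a shift, and $X'$ is losing in $G$ iff $(X')^{c}$ is winning in $G^{*}$. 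Consequently ``every shift of $X$ is losing in $G$'' is equivalent to ``no losing coalition of $G^{*}$ shifts onto $X^{c}$,'' which is precisely the shift requirement for shift-maximality. Combining the two halves establishes the lemma, and since complementation is an involution, uniqueness is preserved across the dual.

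With the lemma in hand the theorem follows by chaining equivalences. A game $G$ is conjunctive hierarchical iff $G^{*}$ is disjunctive hierarchical, by Theorem~\ref{all-exists-dual} together with $G=G^{**}$; this holds iff $G^{*}$ is a complete game with a unique shift-maximal losing coalition, by Theorem~\ref{shift-maximal-losing}; and this in turn holds iff $G$ is complete, by self-duality of $\succeq$, with a unique shift-minimal winning coalition, by the complementation lemma. Reading the chain in both directions yields exactly the asserted characterization of conjunctive hierarchical games as the complete games with a unique shift-minimal winning coalition.
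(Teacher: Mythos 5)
Your proposal is correct and follows essentially the same route as the paper: both reduce the statement to Theorem~\ref{shift-maximal-losing} via duality (Theorem~\ref{all-exists-dual}, $G=G^{**}$, self-duality of $\succeq$), and both hinge on the observation that complementation carries shift-minimal winning coalitions of $G$ to shift-maximal losing coalitions of $G^{*}$, including the reverse-direction matching of shifts. Your packaging of that observation as an explicit bijection lemma is just a cleaner organization of the argument the paper carries out inline.
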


\begin{proof}
Let $H_\forall({\bf n},{\bf k})$ be a conjunctive hierarchical
game. By Theorem~\ref{all-exists-dual}, the dual game of
$H_\forall({\bf n},{\bf k})$ is a disjunctive hierarchical game
$H_\exists({\bf n},{\bf k}^{*})$. If we can prove that the class of
complete games with a unique shift-minimal winning coalition is
dual to the class of complete games with a unique shift-maximal
losing coalition, then by Theorem~\ref{shift-maximal-losing} this will be sufficient.

Let $G=(P,W)$ be a simple game with the unique shift-maximal
losing coalition $S$. By definition, $S^c$ is winning in $G^{*}.$
Let us prove that it is a shift-minimal winning coalition.
Consider any other coalition $X$ that can be obtained from $S^c$
by a shift in $G^{*}$. It means that there are players $i \in X$ and $j
\notin X$ such that $j \prec_{G^{*}} i$ and $X=( S^c \setminus
\{i\}) \cup \{j\}$. The complement of $X$ is the set $X^c=(S
\setminus \{j\})\cup \{i\}$. Furthermore, $j \prec_G i$. The
coalition $X^c$ is winning in $G$, because there does not exist a
losing coalition from which $S$ can be obtained by a shift.
Therefore, $X$ is losing in $G^{*}$ and $S^c$ is shift-minimal. Consider now a subset $X$ of
$S^c$. The complement $X^c$ of $X$ is a superset of $S$. Hence,
$X^c$ is winning in $G$ and $X$ is losing in $G^{*}$. Thus, $S^c$
is the shift-minimal winning coalition in $G^{*}$.

We claim that $S^c$ is the unique shift-minimal winning coalition
in $G^{*}$. Assume, to the contrary, there is another
shift-minimal winning coalition $X$ in $G^{*}$. As we have seen
above $X^cS$ would be shift-maximal losing coalition in $G$ and it is different from $S$, a
contradiction.
\end{proof}

It is interesting that the class of complete games with a unique shift-minimal winning coalition was studied before \cite{RePEc:eee:ejores:v:188:y:2008:i:2:p:555-568},
without noticing that this class is actually the class of conjunctive hierarchical games.

\begin{theorem}
Let $G=H_\forall({\bf n},{\bf k}) $ be an $m$-level conjunctive
hierarchical simple game. Then $G $ is a weighted majority game iff one of
the following conditions is satisfied:
\begin{enumerate}
\item[(1)] $m=1$;
\item[(2)] $m=2$ and $k_2=k_1+1$;
\item[(3)] $m=2$ and $n_2= k_2-k_1+1$;
\item[(4)] $m\in \{2,3\}$ and $k_1=n_1$. When $m=3$, $G$ is weighted if
and only if the reduced game $H_\forall({\bf n},{\bf
k})^{\{1^{n_1}\}}=H_\forall({\bf n}',{\bf k}')$, where ${\bf n}'=(n_2,n_3)$ and
${\bf k}'=(k_2-k_1,k_3-k_1)$ falls under (2) or (3);
\item[(5)] $m\in \{2,3,4\}$,  $k_m = k_{m-1}$,  and the reduced game
$H_\forall^{\{m^{n_m}\}}({\bf n},{\bf k})=H_\forall({\bf n}',{\bf k}')$,
where ${\bf n}'=(n_1,\ldots, n_{m-1})$ and ${\bf k}'=(k_1,\ldots,
k_{m-1})$ falls under one of the (1) -- (4);
\end{enumerate}
\end{theorem}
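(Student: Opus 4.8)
The entire statement is the exact dual of Theorem~\ref{BTW} (the weightedness criterion for disjunctive games), so the cleanest route is not to redo the trading-transform combinatorics but to transport Theorem~\ref{BTW} across the duality established in Theorem~\ref{all-exists-dual}. The plan is to use three facts already in hand: (i) taking the dual preserves weightedness; (ii) by Theorem~\ref{all-exists-dual}, $H_\forall({\bf n},{\bf k})^{*}=H_\exists({\bf n},{\bf k}^{*})$ with ${\bf k}^{*}$ the complementary threshold vector; and (iii) ${\bf k}^{{*}{*}}={\bf k}$. Consequently $G=H_\forall({\bf n},{\bf k})$ is weighted if and only if $H_\exists({\bf n},{\bf k}^{*})$ is weighted, and we may simply apply Theorem~\ref{BTW} to the latter and translate each of its five conditions back through the map ${\bf k}\mapsto{\bf k}^{*}$.

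First I would record the translation dictionary for the defining relations. Writing $N_i=\sum_{j\le i}n_j$ and $k^{*}_i=N_i-k_i+1$, the computations already appearing in the proof of Theorem~\ref{Thm_2} give the equivalences I need: $k^{*}_1\le n_1 \Longleftrightarrow k_1>0$ (automatic), $k^{*}_i<k^{*}_{i-1}+n_i \Longleftrightarrow k_{i-1}<k_i$, and the degenerate equality $k^{*}_m=k^{*}_{m-1}$ (dummies in $H_\exists$) corresponds to $k_1=n_1$ on the $H_\forall$ side via the same self-dual bookkeeping. With this dictionary each clause of Theorem~\ref{BTW} rewrites mechanically. Clause (1), $m=1$, is self-dual. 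Clause (2) of the disjunctive theorem, $k^{*}_2=k^{*}_1+1$, becomes $n_2=k_2-k_1+1$, i.e. the present clause (3); symmetrically the disjunctive clause (3), $n_2=k^{*}_2-k^{*}_1+1$, becomes $k_2=k_1+1$, the present clause (2). Thus (2) and (3) swap roles under the dual — a sanity check worth stating explicitly, since it confirms the dictionary is applied consistently.

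For the two-branch clauses I would check that the passer/dummy asymmetry flips correctly. In Theorem~\ref{BTW}, clause (4) is governed by $k^{*}_1=1$ (a passer of level one); since $k^{*}_1=n_1-k_1+1$, the condition $k^{*}_1=1$ is exactly $k_1=n_1$, which is the present clause (4). Moreover the ``passer'' in $H_\exists$ dualizes to a full-threshold first level in $H_\forall$, and Proposition~\ref{reduced} is precisely the dual statement of Proposition~\ref{cut_tail}: removing the self-sufficient level in the disjunctive subgame corresponds to forming the reduced game $G^{\{1^{n_1}\}}$ with shifted thresholds $(k_2-k_1,\ldots)$, matching the reduced game named in the present clause (4). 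Likewise the disjunctive clause (5), governed by the dummy condition $k^{*}_m\ge k^{*}_{m-1}+n_m$ and the subgame that deletes the bottom level, dualizes to the condition $k_m=k_{m-1}$ and the reduced game $G^{\{m^{n_m}\}}$ deleting the top level — exactly the present clause (5), again via the dual of Proposition~\ref{cut_tail}. Since Theorem~\ref{BTW} is an ``if and only if'' and every operation used (duality, passing to sub/reduced games) preserves weightedness in both directions, the translated list is again exhaustive and exclusive.

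The main obstacle is purely bookkeeping rather than conceptual: one must verify that the subgame/reduced-game reductions appearing inside clauses (4) and (5) of Theorem~\ref{BTW} really correspond, level-for-level and threshold-for-threshold, to the reduced/subgame reductions named in the present theorem. Concretely, I would need to confirm that $\bigl(H_\exists({\bf n},{\bf k}^{*})_{A}\bigr)^{*}=\bigl(H_\exists({\bf n},{\bf k}^{*})^{*}\bigr)^{A}=H_\forall({\bf n},{\bf k})^{A}$ for $A=\{1^{n_1}\}$ and the analogous identity with $A=\{m^{n_m}\}$, using the stated relations $(G_A)^{*}=(G^{*})^A$ and $(G^A)^{*}=(G^{*})_A$, and then check that the threshold vector of the resulting reduced game is indeed the dual of the threshold vector of the disjunctive subgame. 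These are finite, explicit index manipulations, but they are where an error could hide; once they are pinned down the theorem follows immediately by quoting Theorem~\ref{BTW}.
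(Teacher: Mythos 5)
Your proposal is correct and follows essentially the same route as the paper, whose entire proof is the one-line observation that the result follows from Theorem~\ref{BTW}, the duality $H_\forall({\bf n},{\bf k})^{*}=H_\exists({\bf n},{\bf k}^{*})$ of Theorem~\ref{all-exists-dual}, and Proposition~\ref{reduced}; you simply make the clause-by-clause dictionary explicit (including the correct swap of clauses (2) and (3)). One small slip in your first paragraph — $k^{*}_m=k^{*}_{m-1}$ corresponds to the dummy condition $k_m=k_{m-1}$, not to $k_1=n_1$ (which instead corresponds to $k^{*}_1=1$) — is corrected by your own later computations, and your flagged sub/reduced-game bookkeeping is exactly the content of Propositions~\ref{cut_tail} and~\ref{reduced} together with $(G_A)^{*}=(G^{*})^A$.
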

\begin{proof}
The theorem straightforwardly follows from Theorem~\ref{BTW}, the
duality between conjunctive hierarchical games and disjunctive
hierarchical game and Proposition~\ref{reduced}.
\end{proof}

\section{Further Research}

An interesting question in relation to complete simple games is to find how quickly can dimension grow 
depending on the number of players (for general games this growth is exponential). Thus it will be of interest to find the dimension of disjunctive hierarchical games or get an upper bound for their 
dimension. It should be noted that  the dimension of conjunctive hierarchical games, as it follows from results of
\cite{RePEc:eee:ejores:v:188:y:2008:i:2:p:555-568}  and Theorem~\ref{shift-minimal-winning}, is rather well-understood and has linear growth. 



\bibliographystyle{apacite}
\bibliography{grypiotr2006}


\end{document}